\documentclass[a4paper, 12pt]{amsart}

\title{Some remarks on the geometry of the Standard Map}
\author{Katie Bloor}
\address{Mathematics Department, Imperial College London, SW7 2AZ}
\email{katie.bloor98@imperial.ac.uk}

\author{Stefano Luzzatto}
\address{Mathematics Department, Imperial College London, SW7 2AZ}
\email{Stefano.Luzzatto@imperial.ac.cuk}
\urladdr{http://www.ma.ic.ac.uk/~luzzatto}

\thanks{We would like to thank Oliver Butterley for several
discussions and for help in drawing Figure \ref{fig:foliations}.
Figure \ref{ksmall} was drawn using the software \texttt{StdMap}
by J. D. Meiss. This research was partly supported by EPSRC grant
T0969901.
}
\date{\today}

\usepackage{amsmath,amssymb,amsthm,
amsfonts,latexsym}
\usepackage[pdftex]{graphicx}
\usepackage{wrapfig}
\usepackage{a4wide}
\usepackage{fancybox}

\newtheorem{theorem}{Theorem}
\newtheorem{lemma}{Lemma}
\newtheorem{proposition}{Proposition}

\newtheorem{corollary}{Corollary}

\theoremstyle{remark}
\newtheorem{remark}{Remark}

\theoremstyle{definition}
\newtheorem{question}{Question}

\def\cprime{$'$}
\def\polhk#1{\setbox0=\hbox{#1}{\ooalign{\hidewidth
    \lower1.5ex\hbox{`}\hidewidth\crcr\unhbox0}}}

\newcommand{\Tt}{\mathbb T^2}

\newcommand{\p}{2\pi k}

\newcommand{\pfi}[2]{\ensuremath{\partial_{#1}f_{#2}}}

\begin{document}
\begin{abstract}
 We define and compute \emph{hyperbolic coordinates} and associated
 foliations  which provide a new way to
describe the geometry of the standard map. We also identify a
uniformly hyperbolic region and a complementary ``critical'' region
containing a smooth curve of tangencies between certain canonical
``stable'' foliations. 
 \end{abstract}   

\subjclass[2000]{37D45}
\maketitle


\section{Introduction and informal statement of results}
\subsection{The standard map}

The Standard Map family is a one-parameter family of maps 
\(
f_k : \Tt\to\Tt
\) on the 2-torus, with a parameter  \( k\in\mathbb R^{+} \),
 given  by 
\[
f_{k}\begin{pmatrix}x\\y\end{pmatrix} =
\begin{pmatrix}x + k\sin(2\pi y)  \\x
+ y + k\sin(2\pi y) \end{pmatrix} 
\mod 1. 
\]
This family was  introduced independently 
by Taylor and by Chirikov \cite{Chi79} in the late 60's and is
related to a variety of problems in many areas of physics, 
see for example \cite{Mei92, Sin94, Mac95} for extensive discussions. 
For \( k=0 \) the map reduces to \( f_{0}(x,y) = (x+y, y) \)
which is completely integrable (see \cite{Mei92} for full
definitions) and \( \mathbb T^{2} \) is foliated by invariant closed
curves.
\begin{wrapfigure}[15]{r}{6cm}
    \begin{center}
 \includegraphics[height=5.5cm]{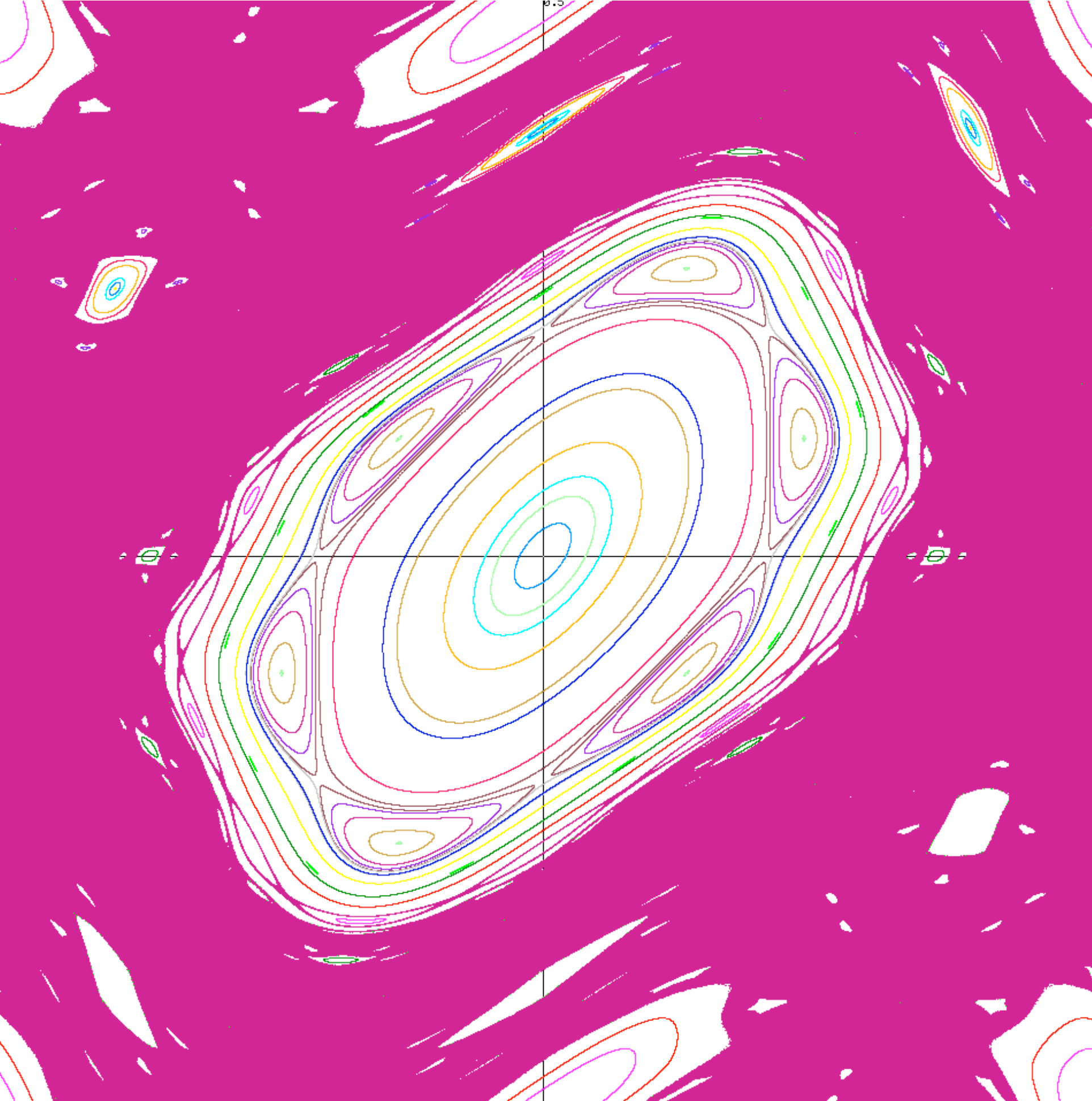}   
\end{center}
\caption{Elliptic islands in a chaotic sea} 
 \label{ksmall}
 \end{wrapfigure}   
 As \( k \) increases these invariant curves break up; the 
 region of parameter values with \( k \) small 
 has provided one of the main examples for
 the application of the KAM theory and its developments such as the
 studies on the splitting of separatrices \cite{Gel99}, 
 formation and properties of Cantori \cite{MacMeiPer84} etc.,
 see \cite{Lla01} for a comprehensive review of the theory and
 and references. 
 Computer pictures show complicated structures in which regions of
 invariant circles co-exist with apparently more complicated ``chaotic''
 dynamics, see Figure \ref{ksmall}. 
 As \( k \) increases further these ``elliptic islands'' shrink until
they become essentially invisible and the ``chaotic'' region 
seemingly takes
over more and more of the phase space.  
A rigorous description of the 
structure of the dynamics for large \( k \) appears to be an extremely
difficult problem. 

\newpage
A basic but still open
question is the following.

\begin{question}
Are there any values of the parameter \( k \) for which \( f_{k} \)
has positive metric entropy ? Equivalently, are there any values of
the parameter \( k \) for which \( f_{k} \) has an ergodic component
of positive measure on which it is (nonuniformly) hyperbolic ?
\end{question}

Numerical evidence suggests a positive answer to this question for
large values of \( k \) and  it 
seems  reasonable to expect the existence of
several, possibly infinitely many, ergodic components of positive
measure \cite{Pus86}  perhaps
co-existing with elliptic islands (examples of systems in which these
two phenomena are \emph{known} to co-exist are not many but have 
been obtained, e.g. in \cite{Woj81}). However, so far the only really
rigorous results available point in the opposite direction: there is a
dense set of values of \( k \) for which the standard map exhibits
many elliptic points \cite{Dua94}.

It is generally accepted that any progress on this question will
require some detailed concrete analysis of the geometry of the standard
map, although it is not at all clear from what point of view this
geometry should be studied. Some papers have appeared, 
we just mention 
\cite{ChrPol96, ChrPol97} in which generating partitions are
constructed leading to a symbolic description of the dynamics, and 
 a series of papers by Lazutkin
\emph{et al} \cite{GelLaz01,GioLaz00,
GioLazSim97,Laz03,Laz00,Laz99} suggesting a possible strategy, 
however a complete 
implementation of this argument is, to our knowledge, not yet
available.  
We mention also \cite{Pus00} in which a version of the
entropy conjecture is proved for some ``random approximations'' of the
standard map. Other families of maps have also been studied which
exhibit analogous phenomena as the standard map, see for example the
\emph{fundamental map} introduced in \cite{BroChu96}. 

\subsection{Hyperbolic coordinates and approximate critical points}

The aim of this paper is to carry out an analysis of certain
geometrical features of the standard map which, to our knowledge, have
not been investigated before. This analysis is based on the notion of
\emph{hyperbolic coordinates} which essentially consists of carrying
out a singular value decomposition of matrix of the derivative \( Df_{k}(x) \)
at each point \( x \), and thus decomposing the tangent space into
orthogonal subspaces which are most contracted and most expanded by \( 
Df_{k}(x) \). We shall show (Theorem 1) that this decomposition is
well defined at every point \( x \) both for \( f_{k} \) and well as
for its inverse \( f_{k}^{-1} \), that it depends smoothly on \( x \)
and that it defines foliations of the torus \( \mathbb T^{2} \), see
Figure \ref{fig:foliations}. We obtain explicit formulas which allow
us to describe the geometry of these foliations in some detail. 

The singular value decomposition of a two by two matrix is quite different 
from the eigenspace decomposition, even in the case of hyperbolic
diagonalizable matrices. A first key observation is that the singular 
value decomposition always yields orthogonal subspaces, whereas this
is clearly not necessarily the case for the eigenspace decomposition. 
A second, non trivial, observation is that in the case of a hyperbolic 
matrix \( A \), the contracting subspaces \( e^{(n)} \) of the
singular value decomposition of the iterates \( A^{n} \) of the matrix,
actually converge (exponentially fast) to the contracting eigenspace.
It is of course not true that the corresponding expanding subspaces \( 
f^{(n)} \) of the singular value decomposition converge to the
expanding eigenspace, since \( f^{(n)} \) is always orthogonal to \(
e^{(n)} \). However, we can consider the inverse \( A^{-1} \) of the
matrix \( A \), for which the contracting and expanding eigenspaces
are swapped, and obtain the expanding eigenspace of \( A \) which is
exactly the contracting eigenspace of \( A^{-1} \) as a limit
of most contracted directions \( e^{(-n)} \) for the singular value
decomposition for \( A^{(-n)} \).

The notion and properties of hyperbolic coordinates have
been systematically developed and applied in \cite{HolLuz06} 
for the proof of a version of the Stable Manifold Theorem under quite 
weak hyperbolicity assumptions and were originally inspired by certain 
arguments in \cite{BenCar91} and \cite{MorVia93} on the geometry of
H\'enon and H\'enon-like maps. 
Indeed, the properties of the convergence of the 
contracting subspaces apply not only to the iteration of a fixed
matrix but also to the composition of different matrices, as occurs
naturally, for example, 
when considering derivatives \( Df^{n}(x) \) of higher iterates \(
f^{n}(x) \) of some map. If the orbit of the not necessarily periodic 
point \( x \) has some
hyperbolic decomposition (which generalizes the eigenspace
decomposition in the case of a hyperbolic fixed or periodic point),
then the contracting directions \( e^{(n)} \) converge to the
contracting subspace of this decomposition and the contracting
directions \( e^{(-n)} \) for the inverse map converge to the
contracting subspace of the decomposition for the inverse map is
exactly the expanding direction for the original map. 
\begin{figure}[h]
    \begin{center}
\includegraphics[width=9cm]{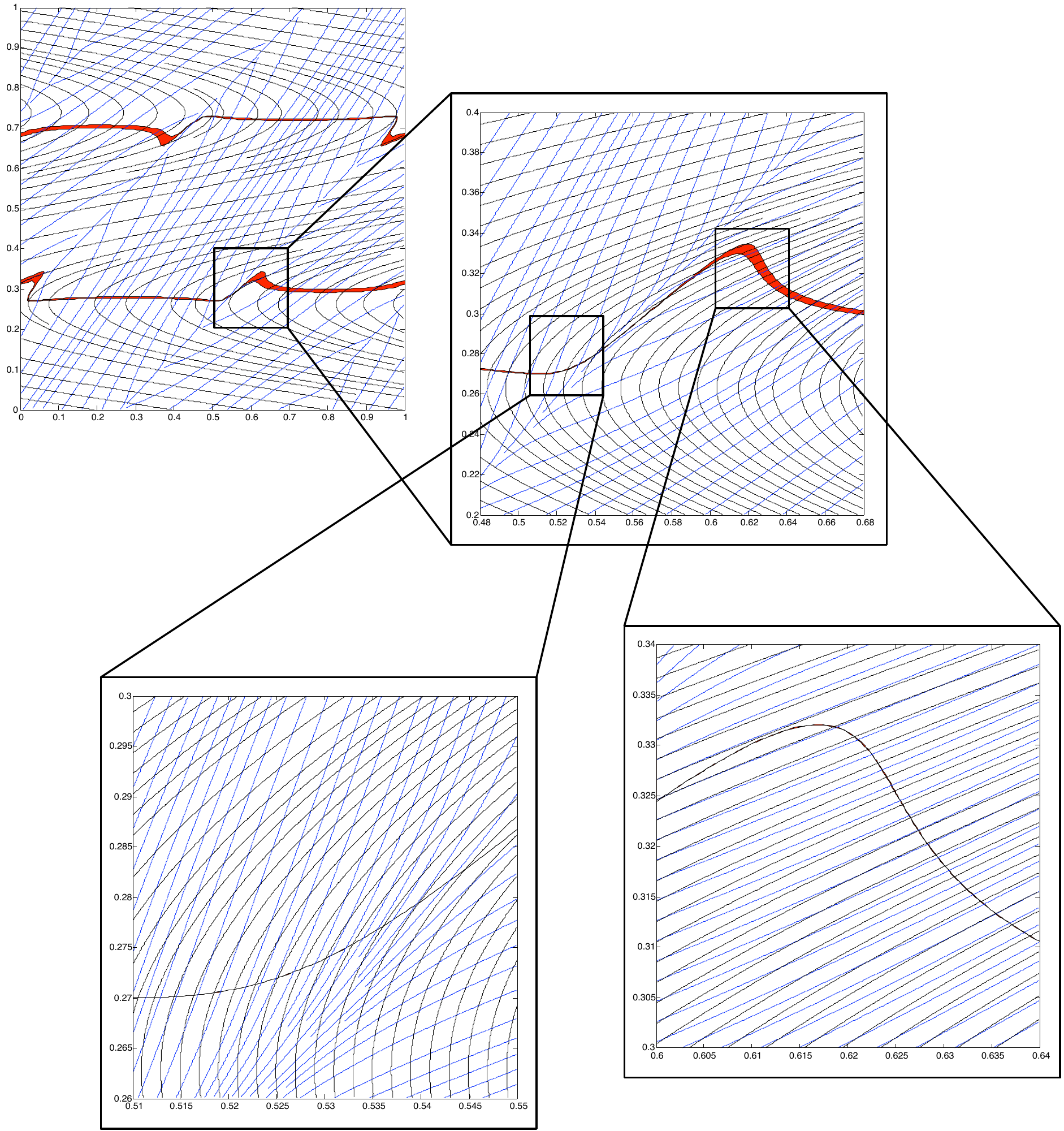}
\end{center}
\caption{Tangencies between \( \mathcal E^{(1)} \) and \(
\mathcal E^{(-1)} \).}
\label{tangencies}
\end{figure}

This approach however really comes into its own when considering
points which do not have a hyperbolic decomposition but are points of 
tangencies between stable and unstable manifolds. Within the framework
of hyperbolic coordinates these can be described as points for which
the contracting directions \( e^{(n)} \) in forward time 
and the contracting directions \( e^{(-n)} \) in backward time both
converge but instead of converging to different subspaces, they 
\emph{converge to the same subspace}. Indeed, it follows from the
properties of stable and unstable manifolds that tangent directions
are contracting in forward and backward time respectively. 
Motivated by these observations we investigate the locus of tangencies
between the the most contracting direction \( e^{(1)} \) for the
standard map \( f_{k} \) and the most contracting directions \( e^{(-1)} \)
for its inverse \( f_{k}^{-1} \). We shall show (Theorem 2) that these tangencies 
occur along two smooth curves, see Figure \ref{tangencies}, 
for which we shall derive explicit
expressions which allow us to describe their geometry in considerable 
detail.
These curves are contained in two quite narrow horizontal strips of
height of the order of \( 1/k \). To complement this description we
also show (Theorem 3) that \( f_{k} \) is uniformly hyperbolic outside
these strips.

\subsection{General remarks}

The study carried out in this paper does not require any very
sophisticated arguments or calculations and, in itself, does not yield
any spectacular conclusions. It should rather be thought of as a
preliminary study of the geometry of the standard map from a novel
point of view (though some related foliations to 
those discussed here appear as part of the 
arguments in \cite{Dua94}) 
and with the aim of suggesting certain techniques and
directions which might lead to more serious and interesting
conclusions. 

A full resolution of the so-called ``entropy
conjecture'', i.e. the existence of parameter values for which the map
has positive metric entropy is perhaps still too ambitious a goal, but
there are some natural partial results in that direction which seem
more feasible. The first one concerns the geometry of such parameters.
If the solution to the entropy conjecture is to be achieved through
geometric arguments, it would make sense to understand what
characteristics the geometry of such a map would have. The state of
affairs at the moment is more like trying to prove that a certain
phenomenon occurs for some parameter values, 
but having no idea what we are really looking for or what the geometry
of the map would look like at such parameter values. 
We therefore formulate the following problem.

\begin{question}
Construct a geometric model of the
standard map with positive entropy. 
\end{question}
    
Models of this kind have been proposed by simplifying the map in
certain key aspects. What we mean here however is to describe a model 
for the occurrence of positive entropy within some member of the actual
Standard Map family \( f_{k} \) as defined above.  
Of course such a model would rely on highly non-trivial
assumptions about the dynamics, but understanding such a model
could lead to an improved understanding of the actual dynamics of the 
standard map and possibly to a strategy for verifying the assumptions 
for some parameter values. 

One strategy for building such a model would be to start from the
geometrical picture described in this paper: two smooth curves of
critical points inside two very thin strips and uniform hyperbolicity 
outside these strips. Considering both forward and backward iterates 
of the critical strips it should be possible to construct higher order
hyperbolic coordinates and thus higher order critical curves, at least 
in certain parts of the critical strip. Imposing a bounded recurrence 
condition to the critical strip it may be possible to set up an
inductive argument yielding a positive measure set on which the 
map is nonuniformly hyperbolic and thus has positive metric entropy.
Using this strategy it might be possible also to address the following
problem which to our knowledge has not been stated before but which
would seem to be of independent interest. 

\begin{question}
    Show that for all sufficiently large values of the parameter \( k
    \), any ergodic component of positive measure is (nonuniformly)
    hyperbolic.
    \end{question}

 An affirmative answer to this question would reduce the entropy
 conjecture to showing that for arbitrarily large values of \( k \)
 there are ergodic components of positive measure, though it is not
 clear that this would simplify the problem.    

 \section{Formal statement of results}
 
 The precise statements of our three theorems require a significant
 amount of notation. We shall therefore state them in three separate
 section, preceded by the relevant definitions. 
 
\subsection{Hyperbolic Coordinates}\label{hypcoord}
We start with some general definitions which apply to any 
surface diffeomorphism \( f: M \to M \). 
For \( z\in M \) and \( n\geq 1 \)
let
\[
F_{n}(z) = \|(Df^{n}_{z})\| = 
\max_{\|v\|=1}\{\|Df^{n}_{z}(v)\|\}, 
\]
and 
\[
E_{n}(z) =
\|(Df^{n}_{z})^{-1}\|^{-1} = 
\|Df^{-n}_{f^{n}(z)}\|^{-1}
=\min_{\|v\|=1}\{\|Df^{n}_{z}(v)\|\}.
\]  
These quantities have a  simple geometric interpretation: 
 since \( Df_{z}^{n}: T_{z}M \to 
 T_{f(z)}M \) is a linear map, it sends circles to ellipses, 
 see Figure  \ref{HypCoord},
 \begin{figure}[h]
 \includegraphics[width=8cm]{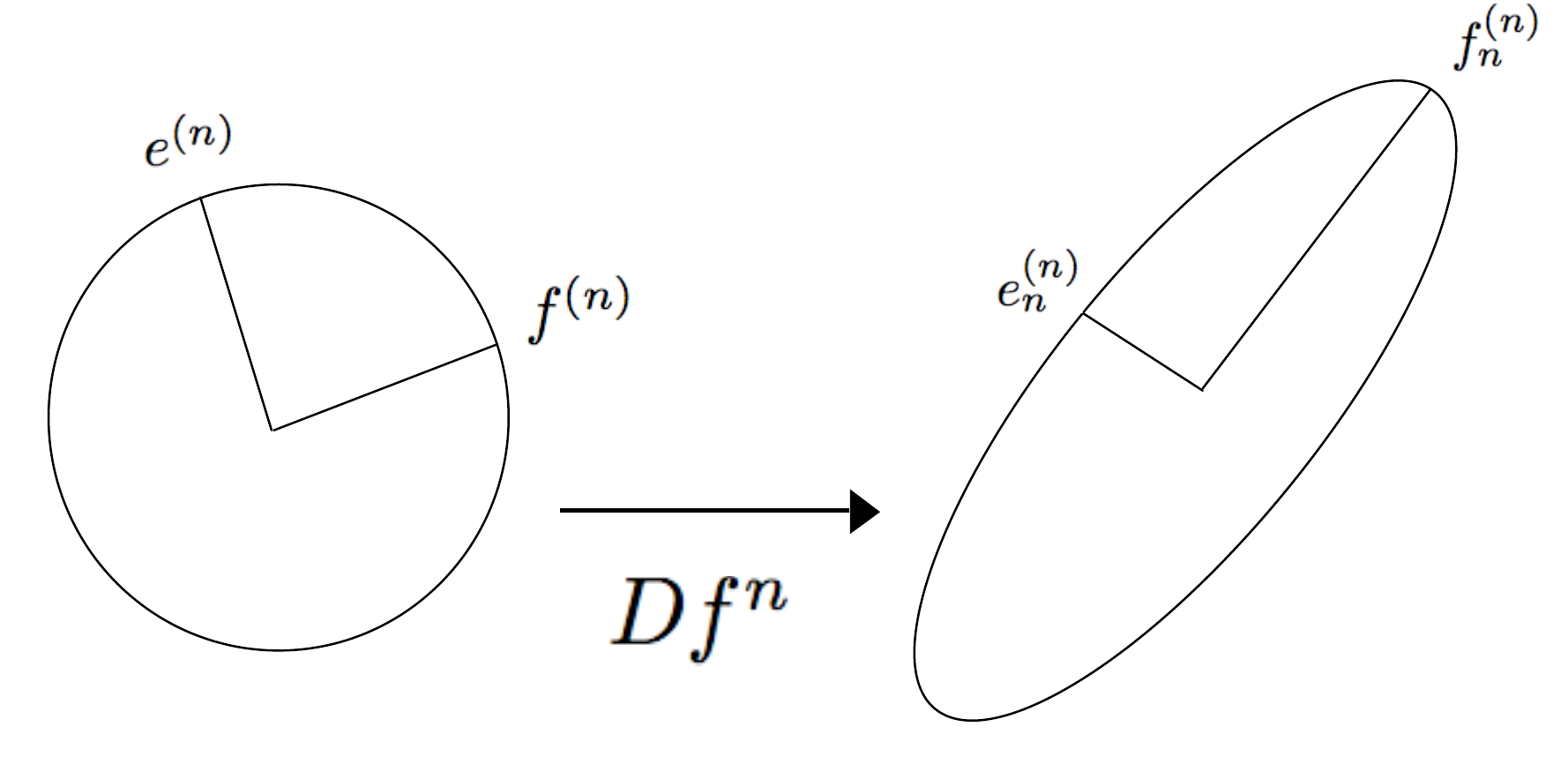}
 \caption{Hyperbolic coordinates of order \( n \)}
 \label{HypCoord}
 \end{figure}
 then
  \( F_{n}(z)\) is precisely half the length of 
 major axis of this ellipse and \( E_{n}(z) \) is 
 precisely half the length of the minor axis of this ellipse. Then let 
 \(
 H_{n}(z) = E_{n}(z)/F_{n}(z). 
 \)
 Clearly we always have \( H_{n}(z) \leq 1 \). 
 \( H_{n}(z)=1 \), or \( E_{n}(z)=F_{n}(z) \),  means that the
 image of the unit circle under \( Df^{n} \) is itself a circle and \( 
 Df^{n} \) is therefore a conformal linear map. On the other hand
 \( H_{n}(z) < 1 \)
is in some sense a (very weak) 
  \emph{hyperbolicity condition} which 
 implies that  the image of the unit circle is 
strictly an ellipse and that therefore distinct \emph{orthogonal} 
unit vectors 
\[ 
e^{(n)}(z) \quad\text{ and }\quad  f^{(n)}(z) 
\]
are defined  in the 
\emph{most contracted} and \emph{most expanded} directions 
respectively by  \( Df^{n}_{z} \).
We can use \( e^{(n)}(z) \) and \( 
f^{(n)}(z) \) as basis vectors for the tangent space \( T_{z}M \)
thus can think of them as defining a new system of coordinates which
we call \emph{hyperbolic coordinates}. 
\begin{wrapfigure}{r}{4cm}
    \includegraphics[width=3cm]{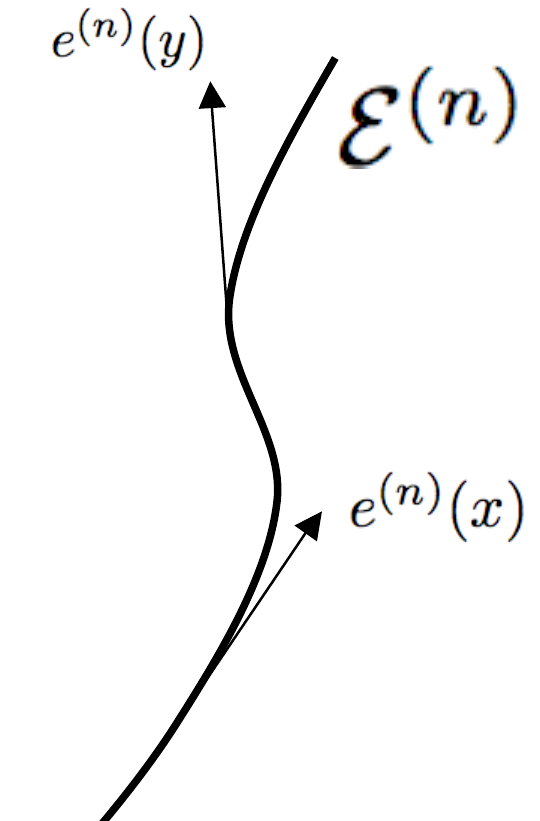}
 \end{wrapfigure} 
The most contracted and most expanded
directions, in the open regions in which they
are defined, determine direction fields whose regularity 
is exactly that of the partial derivatives of \( f \). Thus, under
mild regularity conditions, these open
regions  admit local integral curves, or foliations, 
which we denote by 
\[ 
\mathcal E^{(n)} \quad \text{ and } \quad \mathcal F^{(n)}.
\]
We can think of the curves of these foliations as \emph{finite time}
stable and unstable manifolds.  Indeed, for finite time these curves 
are more relevant than
the real stable and unstable manifolds since they are in some sense
the most contracted and most expanded curves under  \( f^{n} \). 

Since \( f \) is invertible, all the notions and definitions given
above can be applied to \( f^{-1} \). We shall  talk about
hyperbolic coordinates in \emph{forward time} when referring to the
application of these ideas to \( f \) and \emph{backward time} when
referring to the application of these ideas to \( f^{-1} \). 

The first main result of this paper is the detailed description of the
geometry of the foliations \( \mathcal E^{(\pm 1)} \) and \( \mathcal
F^{(\pm 1)} \)  for the standard map and its inverse. 

\begin{theorem}
Contracting foliations \( \mathcal E^{1}, \mathcal E^{(-1)}  \) and
expanding foliations \( \mathcal F^{(1)}, \mathcal F^{(-1)} \) 
are defined everywhere and have the geometry 
illustrated in the  Figure \ref{fig:foliations} and discussed below. 
 \end{theorem}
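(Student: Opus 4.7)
The plan is to prove Theorem~1 by direct computation: write down $Df_k$ explicitly, perform the singular value decomposition pointwise, and use a short algebraic check to rule out conformal points both for $f_k$ and for $f_k^{-1}$. Once non-conformality is established globally on $\Tt$, the most-contracted and most-expanded directions are well-defined orthogonal unit vectors depending smoothly on the base point, and the resulting smooth direction fields integrate uniquely to the four foliations.

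First I would set $a = a(y) := 2\pi k \cos(2\pi y)$, so that
\[
Df_k(x,y) = \begin{pmatrix} 1 & a \\ 1 & 1+a \end{pmatrix}, \qquad \det Df_k \equiv 1,
\]
and therefore $E_1(z) F_1(z) = 1$ at every $z$. The associated Gram matrix is
\[
(Df_k)^{T} Df_k \;=\; \begin{pmatrix} 2 & 1+2a \\ 1+2a & 1+2a+2a^{2} \end{pmatrix},
\]
and conformality at $z$ would force this to be a scalar multiple of the identity, i.e.\ $1+2a = 0$ together with $2 = 1+2a+2a^{2}$. Substituting $a = -1/2$ into the second equation yields $1/2 \ne 2$, a contradiction. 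Hence $H_1(z) < 1$ at every $z \in \Tt$.

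Second I would diagonalize $(Df_k)^{T} Df_k$ explicitly to obtain closed-form unit eigenvectors $e^{(1)}(z)$ and $f^{(1)}(z)$, which depend only on $y$ and smoothly so; these direction fields on $\Tt$ integrate to the smooth foliations $\mathcal E^{(1)}$ and $\mathcal F^{(1)}$. Applying the same procedure to $Df_k^{-1}$, whose Gram matrix has analogous form, produces $e^{(-1)}, f^{(-1)}$ and the foliations $\mathcal E^{(-1)}, \mathcal F^{(-1)}$; the non-conformality check is formally identical since the inverse of a non-conformal linear map is non-conformal.

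The genuinely substantive task is not the existence of the foliations, which reduces to the algebraic check above, but rather extracting from the explicit eigenvector formulas the qualitative picture of Figure~\ref{fig:foliations}: describing how the slopes of $e^{(\pm 1)}$ and $f^{(\pm 1)}$ vary with $y$ as $|a(y)|$ ranges from very large (where the contracting direction is nearly horizontal and the expanding direction nearly diagonal) to very small (in the narrow strips around $\cos(2\pi y) = 0$, where the geometry changes most rapidly). This asymptotic analysis in $k$ and $y$ is the main obstacle, and it is precisely what sets up the more delicate description of tangencies and the uniform hyperbolicity region needed for Theorems~2 and~3.
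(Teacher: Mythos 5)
Your existence argument is sound, and in one respect slightly cleaner than the paper's: the paper shows the horizontal vector is expanded, \( \|Df_z(1,0)\|=\sqrt 2>1 \), and uses \( \det Df\equiv 1 \) to get \( E_1<1<F_1 \), whereas you rule out conformality directly from the Gram matrix \( (Df)^{T}Df \); both give \( H_1(z)<1 \) everywhere, for \( f_k \) and for \( f_k^{-1} \), and smoothness of the singular directions follows because the singular values never coincide. Up to that point your route and the paper's are the same computation in different clothes: the paper extracts the singular directions from \( \frac{d}{d\theta}\|Df_z(\cos\theta,\sin\theta)\|=0 \), which yields \( \tan 2\theta=\varphi(y) \) with \( \varphi=-(4\psi_c+2)/(2\psi_c^2+2\psi_c-1) \), \( \psi_c=2\pi k\cos(2\pi y) \), rather than diagonalizing the Gram matrix.

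The gap is that the theorem does not merely assert that the four foliations are defined everywhere; it asserts they have the specific geometry of Figure \ref{fig:foliations} and the surrounding discussion: essentially horizontal leaves of \( \mathcal E^{(1)} \) with folds near \( y=1/4,\,3/4 \) whose tips lie on horizontal lines, the width-\( O(1/k) \) strips bounded by the lines where \( e^{(1)} \) is diagonal, the two closed leaves of \( \mathcal F^{(1)} \), the closed diagonal leaves of \( \mathcal E^{(-1)} \) and the rapid swing of \( e^{(-1)} \) from about \( 3\pi/8 \) to about \( \pi/8 \) across them. In the paper this is the bulk of the proof: the lemma locating the zeros (\( \delta^{*} \)), asymptotes (\( \delta^{\pm} \)) and turning points (\( 0,1/2 \)) of \( \varphi \), and the analogous analysis of \( \tilde\varphi(\tilde y) \) for the inverse; the piecewise definitions of \( \theta^{(\pm 1)} \), which require deciding which of the two orthogonal solutions of \( \tan 2\theta=\varphi \) is the contracting one and tracking it continuously as \( y \) crosses the asymptotes, where the naive branch \( \tfrac12\tan^{-1}\varphi \) switches from contracting to expanding; and the corollaries describing the monotone rotations and rapid swings of \( e^{(\pm1)} \), from which the pictures of the four foliations are read off. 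You explicitly name this asymptotic analysis as ``the main obstacle'' and defer it, so what you have is a correct proof of well-definedness together with a plan for, rather than a proof of, the geometric description that constitutes most of the statement. (If you do diagonalize the Gram matrix and always take the eigenvector of the smaller eigenvalue you avoid the branch-tracking issue, but the quantitative study of how that eigenvector rotates with \( y \) and \( \tilde y \) --- in effect the zeros, asymptotes and turning points of \( \varphi \) and \( \tilde\varphi \) --- still has to be carried out; it does not reduce to the algebraic non-conformality check.)
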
   

 \begin{figure}[h]
     \begin{center}
\includegraphics[width=8cm, height=8cm]{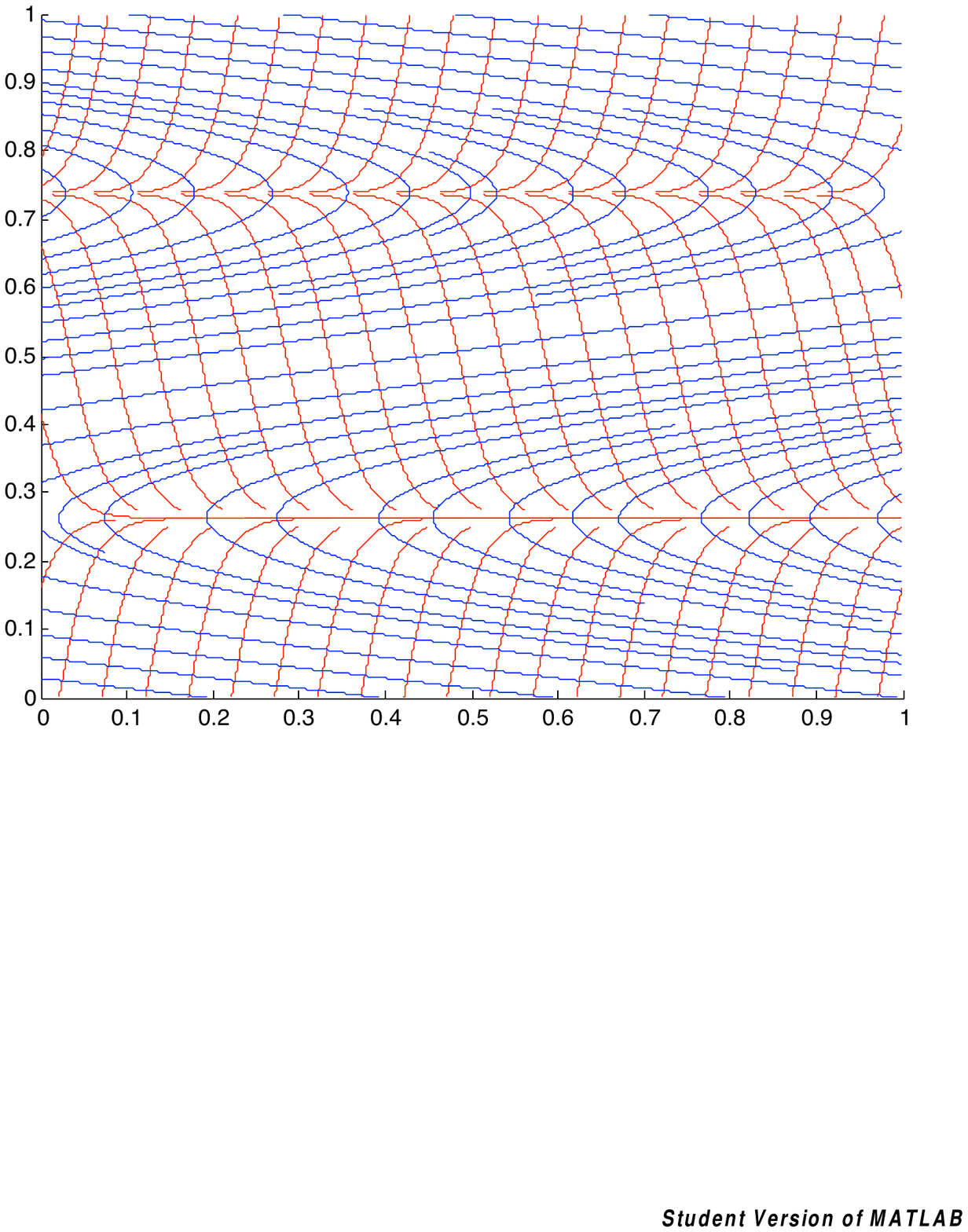}
\includegraphics[width=8cm, height=8cm]{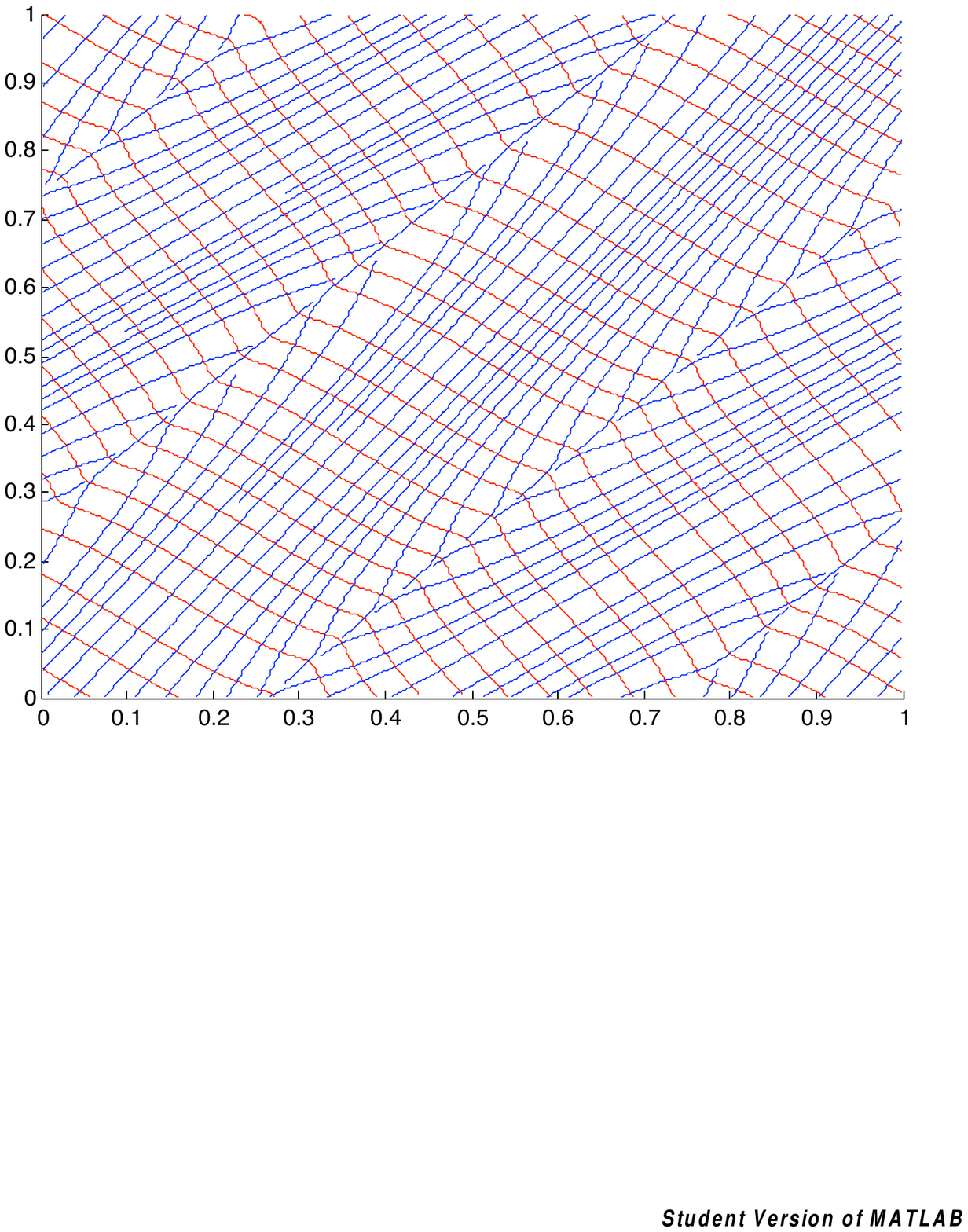}
     \end{center}
	 \caption{Schematic picture of the stable and unstable
	 foliations (\( k=1 \)) in forward time (left) and backward
	 time (right). }
	 \label{fig:foliations}
  \end{figure} 
   
We give here a preliminary discussion of the geometry of these
foliations. All statements are formally defined and proved in the
following sections.   
  
\subsubsection{The contracting foliation in forward time.} 
The contracting foliation \( \mathcal  E^{(1)}\) consists of
essentially horizontal leaves in most of the phase space, although 
close to the
horizontal lines  \( \{y=1/4\} \) and \( \{y=3/4\} \) 
the leaves exhibit a full
 horizontal ``fold''. The contracting direction \( e^{(1)} \) 
 is constant along horizontal lines and thus the tips of the 
 folds lie on a perfectly horizontal line. 
 Figure \ref{fig:foliations} is drawn for clarity
 for \( k=1 \) but in fact for larger \( k \) the folds occur in a
 very thin region. To get an idea, if we define a horizontal strip
 containing the folds in such a way that the boundaries of the strip
 are precisely the horizontal lines at which the contracting
 directions (tangent to the leaves of \( \mathcal E^{(1)} \)) are
 aligned with the positive and negative diagonals, 
 then the width of this strip is of order \( 1/k \).

\subsubsection{The expanding foliation in forward time}
 
The expanding foliation \( \mathcal F^{(1)} \) is everywhere orthogonal
to \( \mathcal E^{(1)} \) (since \( e^{(n)} \) and \( f^{(n)} \) are
always orthogonal; a general property of the
singular value decomposition).  It therefore has exactly two closed leaves given 
by the horizontal lines through the tips of the folds of the leaves of
\( \mathcal E^{(1)} \). All other leaves wrap around the torus 
accumulating on these two closed 
leaves as illustrated in Figure \ref{fig:foliations}. Notice that for 
large \( k \) these leaves are almost vertical moving very ``quickly''
from one folding region to the other, and then folding very sharply
and wrapping infinitely many times round the torus.

\subsubsection{The contracting foliation in backward time}
 This time the
contracting direction \( e^{(-1)} \) is constant along diagonals, i.e.
along lines of slope 1. The  foliation \( \mathcal E^{(-1)} \)
has two closed leaves: the two diagonals 
\( \{y-x=1/4\} \) and \( \{y-x=3/4\} \). The
other leaves of the foliations behave, from a topological point of
view, in a similar way to the leaves of the foliation \( \mathcal
F^{(1)} \), each one wrapping infinitely many times around the torus
and accumulating on closed
leaves. However, from a geometrical point of view, the situation is 
different. Rather than moving quickly from one closed leaf to 
the other, the leaves wrap around the torus many
times.  Moreover, the tangent directions, i.e. the directions of the
vectors \( e^{(-1)} \) change \emph{very rapidly} near the closed
leaves they , i.e. in a neighbourhood of the
closed leaf of width of the order of \( 1/k \): from an angle of about 
\( \pi/8 \) on one side of the closed leaf to an angle of about \(
3\pi/8 \) on the other side of the leaf. Then, more slowly, they
essentially re-align themselves with the diagonal in the region
between the closed leaves.

\subsubsection{The expanding foliation in backward time}
As before, the geometry of the expanding foliation \( \mathcal 
F^{(-1)} \) is everywhere orthogonal to \( \mathcal E^{(-1)} \) and
is completely determined by the geometry of \( \mathcal E^{(-1)} \).
The general direction of the leaves is along the negative diagonal,
exhibiting a peculiar \emph{bump} as they cross the 
two closed leaves of \( \mathcal E^{(-1)} \). This is related to the 
fact that, as observed in the previous paragraph, hyperbolic
coordinates vary very quickly in a neighbourhood of the 
closed leaves.

\subsection{The critical curve of tangencies}

We now describe the curve of tangencies between \( e^{(1)} \) and \(
e^{(-1)} \).  To obtain an explicit expression for this curve we need 
to define some constants. 
First of all we let 
\[
\delta^{*} :=\frac{1}{2\pi}\cos^{-1}\left(-\frac{1}{4\pi k}\right)
\quad \text{ and } \quad
\delta^{\pm} 
:= \frac{1}{2\pi}\cos^{-1}\left(-\frac{1\pm \sqrt{3}}{4\pi k}
\right)
    \]
 and 
 \[ 
 \hat\delta^{-}_{T} = \frac{1}{2\pi}\cos^{-1}
 \left(-\frac{1+\frac{\sqrt 3}{3}}{4\pi k}\right)
 \quad\text{and}\quad 
 \hat\delta^{+}_{T} = \frac{1}{2\pi}\cos^{-1}
 \left(-\frac{1+3\sqrt 3}{4\pi k}\right)
 \]
 Notice that  
 \[
 1+3\sqrt 3 > 1+ \sqrt 3 > 1+ \sqrt 3/3 
 > 1 >0 > 1- \sqrt 3 
 \]
 and therefore, for \( k \) large and since 
 the cosine function is decreasing near \( \pi/2 \) we have 
  \[
  \delta^{-}<1/4< \delta^{*}<  \hat\delta^{-}_{T} < \delta^{+} 
  <  \hat\delta^{+}_{T}
  \]
  and all five constants tend to \( 1/4 \) as \( k\to\infty \).
 Now  let 
  \[ \hat\Delta_{T}= [\hat\delta^{-}_{T}, \hat\delta^{+}_{T}]
      \cup [1-\hat\delta^{+}_{T}, 1-\hat\delta^{-}_{T}]  \]
denote the union of two horizontal strips defined by the above
constants.  It will be useful to introduce an alternative coordinate
system by writing 
\[ 
\tilde y = y-x \mod 1.
\] 
Geometrically, lines of the form \( \{\tilde y = c\} \) 
are lines of constant slope equal to \( +1 \) which intersect 
the \( y \)-axis at \( y=c \). 
Using this notation we can define 
\[ 
\tilde\psi_{c} = \tilde\psi_{c}(\tilde y) = 2\pi k\cos (2\pi \tilde y)  
\quad\text{
and}\quad
 \tilde\varphi = \tilde\varphi (\tilde\psi_{c}) = 
  -\frac{2(\tilde\psi_{c}^{2}+\tilde\psi_{c}+1)}{1+2\tilde\psi_{c}}
 \]
and 
\[   \tilde\Phi (\tilde\varphi ) = 
\frac{1}{2\pi}\cos^{-1}\frac{-(\tilde\varphi+2)
\pm\sqrt{3\tilde\varphi^2 + 4}}{4\pi k \tilde\varphi}
 \]
We note that the expression under the square root is always positive
and therefore, as we shall see below,  
the function \( \Phi(\tilde\psi_{c}) \) is multivalued associating exactly 4
values to each \( \tilde y \).  Both \( \tilde\varphi \) and \(
\tilde\Phi \) are ultimately functions of \( \tilde y \) and therefore
we shall sometimes write them as \( \tilde\varphi(\tilde y) \) and \( 
\tilde\Phi(\tilde y) \). With this notation we define the possibly
multivalued function 
\[ 
\Gamma(\tilde y) = \begin{cases}
\Phi (\tilde y) \cap 
[\delta^{-}, \delta^{+}] \cup 
[1-\delta^{+}, 1-\delta^{-}] & \text{ if } \tilde y\in [0,
\delta^{*}]\cup [1-\delta^{*}, 1]\\
\Phi (\tilde y) \cap 
[\delta^{+}, 1-\delta^{+}] & \text{ if } 
\tilde y \in 
[\delta^{*}, 1-\delta^{*}].
\end{cases}
\]
 We shall show below that \( \Gamma \) takes on exactly two values
 for each value of \( \tilde y \). 
\begin{theorem}\label{th:tang}
    The set \( \mathcal C^{(1)} = \{\text{points of tangencies of } \mathcal E^{(1)}
\text { and }  \mathcal E^{(-1)} \} \) consists of two smooth curves
contained in  \( \hat\Delta_{T} \) and given by the graph of the
function 
\(  y=\Gamma(\tilde 
y). \)
\end{theorem}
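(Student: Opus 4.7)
The strategy is to exploit the fact that $Df_k(x,y)$ depends on the base point only through $y$ (via $\psi := 2\pi k\cos(2\pi y)$) while $D(f_k^{-1})(x,y)$ depends only through $\tilde y = y-x$ (via $\tilde\psi_c$), so the tangency condition will reduce to an algebraic relation between $y$ and $\tilde y$. A direct computation gives
\[
Df_k = \begin{pmatrix} 1 & \psi \\ 1 & 1+\psi \end{pmatrix}, \qquad D(f_k^{-1}) = \begin{pmatrix} 1+\tilde\psi_c & -\tilde\psi_c \\ -1 & 1 \end{pmatrix},
\]
both of determinant $1$. Writing $e^{(1)}\propto(1,u)$, the eigenvector equation for $(Df_k)^T Df_k$ yields
\[
(1+2\psi)\, u^2 - (2\psi^2+2\psi-1)\, u - (1+2\psi) = 0,
\]
and the analogous computation for $e^{(-1)}\propto(1,\tilde u)$ gives
\[
(\tilde\psi_c^2+\tilde\psi_c+1)\, \tilde u^2 - (1+2\tilde\psi_c)\, \tilde u - (\tilde\psi_c^2+\tilde\psi_c+1) = 0.
\]
Both have the common form $Au^2-Bu-A=0$, whose roots multiply to $-1$, reflecting the orthogonality $e^{(\pm1)}\perp f^{(\pm1)}$.

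The tangency condition is $u=\tilde u$. Because each quadratic has product of roots $-1$, the two share one root iff they share both, which is equivalent to $AB'=A'B$, or $-2A/B=-2A'/B'$. By the definition given before the statement this right-hand side is precisely $\tilde\varphi(\tilde y)$, so the condition reduces to the quadratic $2\tilde\varphi\,\psi^2+2(\tilde\varphi+2)\,\psi+(2-\tilde\varphi)=0$ in $\psi$, whose discriminant simplifies to $4(3\tilde\varphi^2+4)>0$. Solving and inverting through $\psi=2\pi k\cos(2\pi y)$ then recovers exactly the four-valued function $y=\tilde\Phi(\tilde y)$ appearing in the statement.

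The algebraic equation $AB'=A'B$ captures not only the desired tangencies $e^{(1)}\parallel e^{(-1)}$ but also the spurious coincidences $e^{(1)}\parallel f^{(-1)}$, since sharing any root forces sharing all roots. Separating the two types requires comparing singular-value orderings on each branch of $\tilde\Phi$. The natural landmarks come from the degeneracies of the slope equations: at $y=\delta^\pm$ we have $2\psi^2+2\psi-1=0$ and $e^{(1)}$ aligns with the diagonals $u=\pm 1$, while at $y=\delta^*$ we have $1+2\psi=0$ and $e^{(1)}$ is vertical; analogous critical values on the $\tilde y$ axis describe $e^{(-1)}$. Tracking the slope of $e^{(1)}$ continuously through the regions cut out by these critical values and matching against the appropriate branch for $e^{(-1)}$ yields the piecewise definition of $\Gamma$ and, for each $\tilde y$, selects exactly two values of $y$, hence two smooth curves. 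The containment $\mathcal{C}^{(1)}\subset\hat\Delta_T$ is obtained by computing the extremes of $\Gamma(\tilde y)$: these occur at critical points of $\tilde\varphi(\tilde y)$ and direct substitution into $\tilde\Phi$ identifies them with $\hat\delta^\pm_T$. Smoothness is immediate from the explicit formula since $3\tilde\varphi^2+4\geq 4$ uniformly and $\cos(2\pi y)$ stays away from $\pm 1$ throughout $\hat\Delta_T$, so the arccosine factor is smooth there. The main obstacle is this branch-selection step: certifying that $\Gamma$ consistently picks the roots corresponding to \emph{most contracted} directions (rather than the cross combinations) requires careful sign tracking across the partition of the torus determined by $\delta^\pm$ and $\delta^*$.
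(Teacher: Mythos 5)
Your algebraic core coincides with the paper's: the tangency condition is reduced to $\varphi(y)=\tilde\varphi(\tilde y)$, the quadratic $2z\psi_c^{2}+2(z+2)\psi_c+(2-z)=0$ with discriminant $4(3z^{2}+4)$ is exactly the one the paper solves to invert $\varphi$, and the containment in $\hat\Delta_{T}$ via the local maxima of $\tilde\varphi$ at $\tilde y=\delta^{\pm}$ (giving $\hat\delta^{\pm}_{T}$) is the paper's argument. So in substance you are on the same route, just phrased through the slope quadratics for $(Df)^{T}Df$ rather than the $\tan 2\theta$ formula.

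The genuine gap is the step you yourself flag as ``the main obstacle'': separating true tangencies $e^{(1)}\parallel e^{(-1)}$ from the spurious coincidences $e^{(1)}\parallel f^{(-1)}$, and dealing with the region $y\in[0,\delta^{-}]\cup[1-\delta^{-},1]$. You outline a plan (``careful sign tracking'') but do not carry it out, so as written you only show that $\mathcal C^{(1)}$ is \emph{contained} in the solution set of $\varphi=\tilde\varphi$, not that the branches selected by $\Gamma$ are precisely the tangency locus. In the paper this is not an extra obstacle because the disambiguation was already done in Propositions \ref{e1+} and \ref{e1-}: the contracting angles are the piecewise functions $\theta^{(1)}$ and $\theta^{(-1)}$, with offsets $\pi$, $\pi/2$, $0$ fixed by explicit expansion estimates at $y=0$ and $\tilde y=0$ and continued across $\delta^{\pm}$, $\delta^{*}$. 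Given those, the theorem's proof needs only two short lemmas: a quadrant argument showing no tangency for $y\in[0,\delta^{-}]\cup[1-\delta^{-},1]$ (there $e^{(1)}$ and $e^{(-1)}$ lie in different quadrants), and the observation that on the remaining region the offsets of $\theta^{(1)}$ and $\theta^{(-1)}$ agree exactly on $\mathcal R_{1}\cup\mathcal R_{2}$, so $\theta^{(1)}=\theta^{(-1)}\iff\varphi=\tilde\varphi$ there and is impossible elsewhere, since a discrepancy of $\pi/2$ cannot be bridged by $\tfrac12\tan^{-1}$, which takes values in $(-\pi/4,\pi/4)$. If you want to close the gap inside your own framework, the cleanest criterion is that, because $\det Df=1$, a root $u$ of your slope quadratic corresponds to the contracted direction iff $\|Df(1,u)\|<\|(1,u)\|$; verifying this sign once per branch (say at $\tilde y=0,1/2$, and tracking continuity across $\delta^{\pm},\delta^{*}$) is exactly the content of the paper's two propositions. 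One further small caveat: the chart $e^{(1)}\propto(1,u)$ misses the vertical direction, which does occur for $e^{(1)}$ at $y=\delta^{*}$; you should note that $e^{(-1)}$ is never vertical (its quadratic has leading coefficient $\mathcal P_{2}(\tilde\psi_c)\neq 0$), so no tangencies are lost by this parametrization.
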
   
 This gives an explicit, albeit not particularly user-friendly, 
 formula for \( \mathcal C^{(1)} \). In the course 
 of the argument used to derive it, we shall obtain more information
 about its properties and will be able to deduce several facts about
 the geometry of the curve. In particular, we shall define below some 
 additional constants 
 \(
 \hat\delta_{T}^{-}, \delta_{T}^{-}, \delta_{T}^{+}, \hat \delta_{T}^{+} 
 \)
 satisfying 
 \[
 \delta^{-}< 1/4 < \delta^{*} < \hat\delta_{T}^{-} < \delta_{T}^{-} < 
     \delta^{+} < \delta_{T}^{+}< \hat \delta_{T}^{+} 
  \]
with all constants tending to \( 1/4 \) as \( k\to \infty \), as use
these constants to sketch a detailed picture of the curves of
tangencies.
\begin{figure}[h]
 \includegraphics[width=10cm]{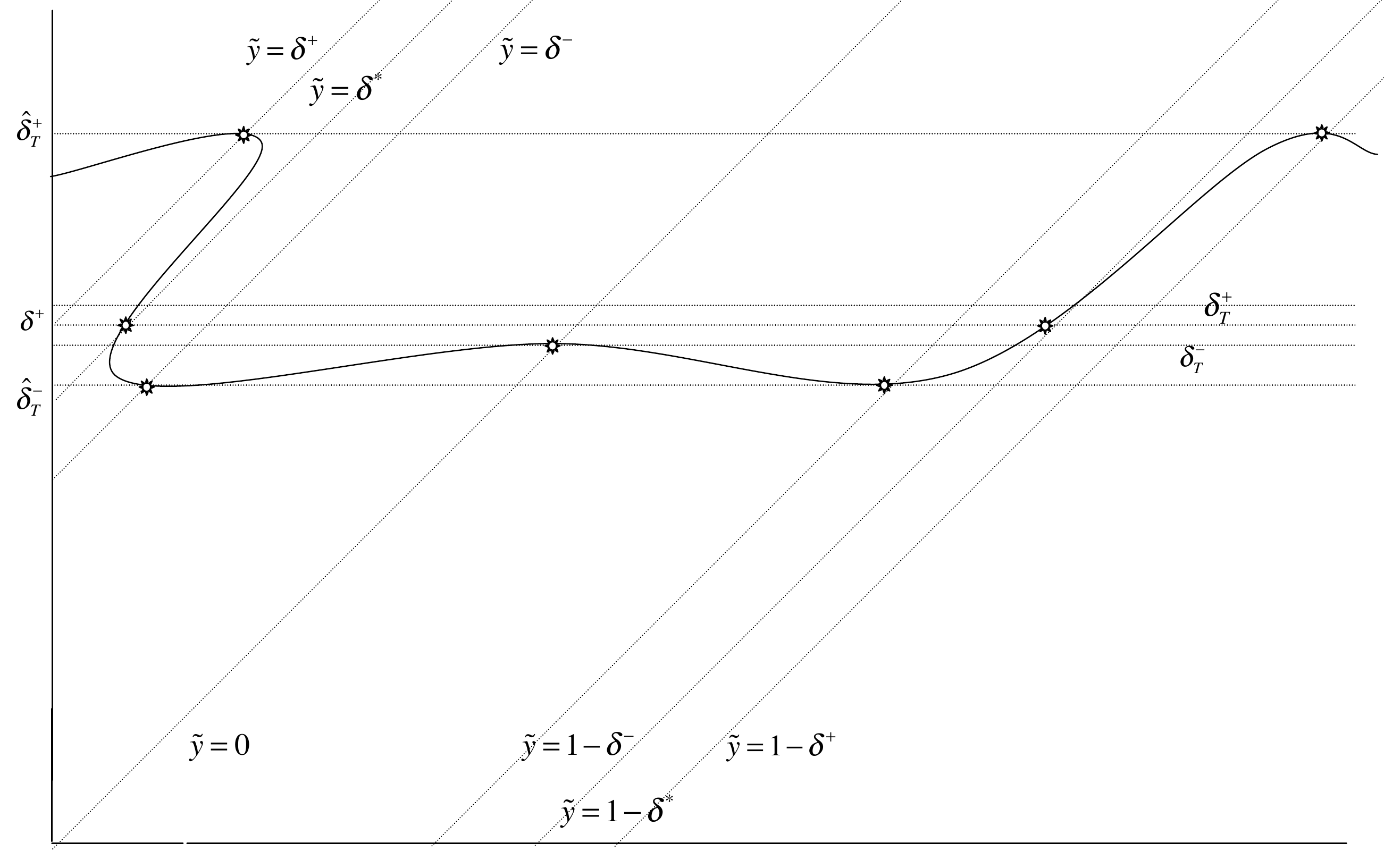}    
 \caption{Part of the curve of tangencies}
 \label{tangencygraph}
 \end{figure}
In particular we shall show that \( \mathcal C^{(1)} \) 
contains the following points of tangencies \( P_{i}(\tilde y, y) \):
\begin{center}
\begin{tabular}{c | c ccccccc}
    & \( P_{1} \)  & \( P_{2} \)  & \( P_{3} \)  & \( P_{4} \) 
    & \( P_{5} \)  & \( P_{6} \)  & \( P_{7} \)  & \( P_{8} \) \\
    \hline
    \(  \tilde y \) &  \( 0 \) & \( \delta^{-} \) & \( \delta^{*} \) & \(
     \delta^{+} \) & \( 1/2 \) & \( 1-\delta^{+} \) 
     & \( 1-\delta^{*} \) & \( 
     1-\delta^{-} \)  \\
    \( y \) & \( \delta^{-}_{T} \) & \( \hat\delta^{-}_{T} \) & \( \delta^{+}  \)
    & \(  \hat\delta^{+}_{T}  \) & \( \delta^{+}_{T}  \) & 
    \(  \hat\delta^{+}_{T}  \) &  \( \delta^{+} \) & 
    \(  \hat\delta^{-}_{T} \) \\
    \hline
 \end{tabular}
\end{center}
The relative positions of these points is illustrated in Figure
\ref{tangencygraph}, notice that the points are indexed according to
their ordering along the curve. The relative positions of the
horizontal coordinates of the points as as illustrated, in particular \( 
P_{2} \) is the point with the smallest horizontal coordinate,
followed by \( P_{3} \) followed by \( P_{1} \).

\subsection{Uniform hyperbolicity away from the critical curves}
We  recall the basic definitions of a hyperbolic structure. 
A \emph{cone} \( C\subset \mathbb R^{2} \) is a closed 
convex union of 
one-dimensional linear subspaces of \( \mathbb R^{2} \). 
For a subset \( \Lambda \subseteq \mathbb T^{2} \), a 
\emph{conefield} over \( \Lambda \) is a family 
\[ 
\mathfrak{C} = \{C(z) \subset T_{z}\mathbb T^{2}\}_{z\in\Lambda}
\]
of cones lying in the tangent spaces of points in \( \Lambda \). We
say that the conefield is \emph{continuous} if the boundaries of the
cones vary continuously with the point \( z \). We say that the
conefield \( \mathfrak C \) is (strictly) 
\emph{forward invariant} under the 
map \( f \) if  
\[ 
Df_{z}(C(z))\subset C(f(z))
\] 
for any \( z \in \Lambda \) for which \( f(z) \in \Lambda \). Notice
that we do not necessarily assume that \( \Lambda \) is forward
invariant.  We say that the conefield \( \frak C \) is
\emph{backward invariant} if the \emph{complementary conefield} is 
``forward'' invariant by \( Df^{-1} \), i.e. if 
\[ 
Df^{-1}_{z}(T_{z}\mathbb T^{2}\setminus C(z)) \subset 
T_{f^{-1}(z)}\mathbb T^{2} \setminus C(f^{-1}(z))
\]
for all \( z\in \Lambda \) for which \( f^{-1}(z)\in \Lambda \). 
We say that the (not necessarily invariant) subset \( \Lambda\subset
\mathbb T^{2} \) has a \emph{hyperbolic structure} if there exists and
conefield \( \mathfrak C \) over \( \Lambda \) which is forward and
backward invariant and which satisfies the following 
uniform hyperbolicity properties: there exist constants \( C, \lambda 
> 0 \) such that 
\[ 
\|Df^{\pm n}_{z}(v^{\pm})\|\geq C e^{\lambda n}\|v^{\pm}\|
\]
for vectors \( v^{+}\in C(z) \), \( v^{-}\in T_{z}\mathbb
T^{2}\setminus C(z) \) and \( n\geq 1 \) such that 
\( z, f(z),\ldots, f^{\pm(n-1)}(z)\in\Lambda \).

We now define the regions outside which we have uniform hyperbolicity.
For  \( 1 \leq m <  k \) let
\[ \delta^{(\pm m)} = \frac{1}{2\pi}\cos^{-1}\frac{m}{\pi k} \]
and 
\[ 
\Delta^{(m)} = \{(x,y): y \notin [\delta^{(-m)}, \delta^{(m)}] 
\cup [1-\delta^{(-m)}, 1-\delta^{(m)}].
\]
We note that for \( m \) relatively small, \( \Delta^{(m)} \) is not
much larger than the strips defined above containing the critical
curve. For \( m \) relatively large, say \( m \approx k^{1-\epsilon} \)
the region  \( \Delta^{(m)} \) is relatively 
much larger than those strips but still very small. 

For an arbitrary point \( z\in\mathbb T^{2} \) and unit vector 
\( v= (\cos \theta, \sin\theta)  \)  we write
. 
The following result says that there is a uniformly hyperbolic 
structure outside \( \Delta^{(m)} \) with minimum expansion rate \( m \).
We suppose here that some large parameter value \( k \) has been fixed.

\begin{theorem}\label{th:hyp} 
Suppose that  \( k > m \geq  2 \), 
\( z\in \mathbb T^{2}\setminus \Delta^{(m)} \) and 
\( v= (\cos \theta, \sin\theta)  \) 
with \( \tan\theta  \in (m^{-1},
m)\).  
Then, letting 
\( Df_{z}(v)  = \tilde v = (\cos\tilde\theta, \sin\tilde\theta)  \)
we have 
 \[ \tan\tilde\theta \in (1-m^{-1}, 1+m^{-1}) \quad\text{ and } \quad 
 \|\tilde v \|\geq m.
 \]
\end{theorem}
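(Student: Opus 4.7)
Since all of the hypotheses are pointwise conditions on $z$, the proof is a direct manipulation of the matrix $Df_k(z)$. I would begin by writing the Jacobian explicitly as
$$
Df_k(z) = \begin{pmatrix} 1 & \psi \\ 1 & 1 + \psi \end{pmatrix}, \qquad \psi := 2\pi k \cos(2\pi y),
$$
and by unpacking the condition $z \in \mathbb{T}^{2} \setminus \Delta^{(m)}$ as $|\cos(2\pi y)| \geq m/(\pi k)$, equivalently $|\psi| \geq 2m$; this is the only feature of the position $z$ used in the sequel. Applying the matrix to $v = (\cos\theta,\sin\theta)$ produces
$$
\tilde v_1 = \cos\theta + \psi\sin\theta = \sin\theta\,(\psi + \cot\theta), \qquad \tilde v_2 = \tilde v_1 + \sin\theta,
$$
so that the whole theorem reduces to two scalar estimates in the variables $\psi$, $\sin\theta$, $\cot\theta$.

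For the slope bound, dividing the two components yields the algebraic identity
$$
\tan\tilde\theta \;=\; 1 + \frac{\sin\theta}{\cos\theta + \psi\sin\theta} \;=\; 1 + \frac{1}{\psi + \cot\theta}.
$$
The hypothesis $\tan\theta \in (m^{-1}, m)$ gives $|\cot\theta| < m$, and combined with $|\psi| \geq 2m$ the reverse triangle inequality yields $|\psi + \cot\theta| > m$. The correction term above is therefore strictly less than $m^{-1}$ in absolute value, which establishes $\tan\tilde\theta \in (1 - m^{-1}, 1 + m^{-1})$.

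For the expansion estimate I would start from the factorisation $|\tilde v_1| = |\sin\theta|\cdot|\psi + \cot\theta|$ and combine it with the slope bound just proved to upgrade it to a lower bound on the full Euclidean norm: since $|\tan\tilde\theta| \geq 1 - m^{-1}$,
$$
\|\tilde v\|^{2} = \tilde v_1^{2}\bigl(1 + \tan^{2}\tilde\theta\bigr) \;\geq\; \tilde v_1^{2}\bigl(1 + (1 - m^{-1})^{2}\bigr),
$$
so that $\|\tilde v\|$ is essentially $\sqrt{2}\,|\tilde v_1|$, gaining a factor of $\sqrt 2$ over any bound on a single coordinate. The task then reduces to showing that the product $|\sin\theta|\cdot|\psi + \cot\theta|$ is of order $m$, using the bound $|\psi + \cot\theta| > m$ from the previous paragraph together with the lower bound $|\sin\theta|\geq 1/\sqrt{m^{2}+1}$ from the cone condition.

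The main obstacle will be that these two lower bounds can in principle be saturated simultaneously, at the boundary of the cone where $\tan\theta \to m^{-1}$ and $\cot\theta$ is close to $m$ with $\psi$ of the opposite sign. I would handle this by a case analysis on the sign of $\psi$ versus $\cot\theta$: when the two have the same sign, $|\psi + \cot\theta| \geq |\psi| \geq 2m$ is actually uniform and combines directly with $|\sin\theta|$ and the $\sqrt 2$ slope gain to give the required expansion; when they are opposite, the regime of maximal cancellation also forces $|\cot\theta|$ to be close to $m$, which pushes $|\sin\theta|$ away from its minimum and compensates for the smaller value of $|\psi + \cot\theta|$. The computational heart of the proof lies in keeping all of these constants aligned so that they produce exactly the bound $\|\tilde v\| \geq m$.
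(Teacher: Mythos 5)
Your direction estimate is correct and is essentially the paper's argument in cleaner form: the paper also reduces to bounding \( \bigl|\sin\theta/(\cos\theta+\psi\sin\theta)\bigr| \) by \( m^{-1} \), using \( |\psi|>2m \) and the cone condition, though via a slightly garbled sign-by-sign case analysis rather than your reverse triangle inequality \( |\psi+\cot\theta|\ge|\psi|-|\cot\theta|>m \). The genuine gap is in the expansion estimate, and it is not just a matter of ``keeping the constants aligned''. In your same-sign case, the most you can extract from \( |\psi+\cot\theta|\ge 2m \), \( \sin\theta\ge 1/\sqrt{1+m^{2}} \) and the factor \( \sqrt{1+\tan^{2}\tilde\theta}\le\sqrt{1+(1+m^{-1})^{2}} \) is a bound of order \( 1 \) (about \( 2\sqrt2 \)), not \( m \). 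In the opposite-sign case your compensation mechanism is backwards: maximal cancellation occurs when \( \cot\theta \) is close to \( m \), i.e.\ \( \tan\theta \) close to \( m^{-1} \), and there \( \sin\theta \) sits \emph{at} its minimum \( \approx 1/\sqrt{1+m^{2}} \), not away from it. Concretely, take \( m=3 \), \( \tan\theta \) just above \( 1/3 \) and \( \psi \) just below \( -6 \): then \( \tilde v_{1}\approx -0.98 \), \( \tilde v_{2}=\tilde v_{1}+\sin\theta\approx -0.67 \), so \( \|\tilde v\|\approx 1.2 \), far below \( m=3 \) (while \( \tan\tilde\theta\approx 0.68 \), so the angle bound only just holds). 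Near the shallow edge of the cone the product \( \sin\theta\,|\psi+\cot\theta| \) is only of order \( 1 \), so no refinement of your case analysis can reach \( \|\tilde v\|\ge m \) there.

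For comparison, the paper's own expansion proof writes \( \|\tilde v\|^{2}=2\cos^{2}\theta+2(1+2\psi)\sin\theta\cos\theta+\psi^{2}(1+\psi)^{2}\sin^{2}\theta \), whereas the correct coefficient of \( \sin^{2}\theta \) is \( \psi^{2}+(1+\psi)^{2} \); all of its subsequent bounds (e.g.\ \( \|\tilde v\|\ge\sin\theta\sqrt{\psi^{4}}\ge 2m \) for \( \psi>0 \), and the \( 8m^{4} \) estimate for \( \psi<0 \)) rely on that spurious extra power of \( \psi \) together with \( \sin\theta>1/(2m) \). With the correct algebra one only gets \( \|\tilde v\| \) of order \( |\psi|\sin\theta \), which is exactly what your factorization \( |\tilde v_{1}|=\sin\theta\,|\psi+\cot\theta| \) gives and which is \( O(1) \) on the wide cone \( \tan\theta\in(m^{-1},m) \). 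So the obstruction you flagged is real: the stated bound \( \|\tilde v\|\ge m \) is not provable (and by the example above not true) on the full cone; an expansion of order \( m \) becomes available only once the slope of \( v \) is bounded below by a constant, e.g.\ for vectors already in the image cone \( \tan\theta\in(1-m^{-1},1+m^{-1}) \), where \( \sin\theta \) is uniformly of order \( 1 \) and \( |\tilde v_{1}|\ge\sin\theta(|\psi|-\cot\theta) \) is comparable to \( m \). If you want a version you can close along your lines, prove the angle bound on the wide cone and the expansion either on the narrow cone or in the form \( \|\tilde v\|\ge c\,|\psi|\sin\theta \).
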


This immediately gives a hyperbolic structure outside \( \Delta^{(m)} \)
with increasingly strong hyperbolicity as \( m \) is chosen large.

\section{Forward time foliations}
\label{forfol}
\subsection{Definitions and notation}
We shall suppose throughout the paper that \( k \) is fixed and
therefore generally omit it as a subscript in the notation.   
We shall repeatedly be calculating inverses of trigonometric
functions and we therefore fix once and for all the
ranges of the inverse functions  as follows: 
\[ 
\sin^{-1}: [-1,1] \to [-\pi/2, \pi/2], 
\quad \cos^{-1}: [-1, 1] \to [0, \pi], 
\quad 
\tan^{-1}: \mathbb R \to [-\pi/2, \pi/2].
\]
We shall often write unit  vectors in the form 
\[
v=(\cos \theta, \sin \theta)
\] 
where we always consider \( \theta \) to be in the
range \( [-\pi, \pi] \). We define two polynomial functions 
\begin{equation}\label{poly}
\mathcal P_{1}(x) = 2x^{2}+2x -1 
\quad \text{ and } 
\quad 
\mathcal P_{2}(x) = x^{2}+x+1.
\end{equation}
Notice that their derivatives are 
\[ 
\mathcal P_{1}'(x) = 4x+2 
\quad\text{ and } \quad \mathcal
P_{2}'(x) = 2x+1.
\]
In particular, \( \mathcal P_{1}'= 2 \mathcal P_{2}' \). 
Moreover, \( \mathcal P_{1} \) has two real solutions given by 
\[
x= \frac{-1\pm\sqrt 3}{2}
\]
whereas \( \mathcal P_{2} \) has no real solutions.
We also define 
\[ 
\psi_{c}=\psi_{c}(y)= 2\pi k\cos (2\pi y) 
\quad \text{ and } \quad 
\psi_{s}= \psi_{s}(y)=2\pi k\sin (2\pi y)
\]
We then write the derivative of \( f \) as 
\begin{equation}\label{Df}
Df{(x,y)} = \begin{pmatrix}1&\p\cos(2\pi y)\\1&1+\p\cos(2\pi
y)\end{pmatrix}
= \begin{pmatrix}1&\psi_{c}  \\1&1+ 
\psi_{c} \end{pmatrix}
\end{equation}
We also let 
\[ 
\varphi = \varphi(y) = 
-\frac{4\psi_{c} + 2}{2\psi_{c}^{2}+2\psi_{c}-1} = -\frac{\mathcal
P_{1}'(\psi_{c})}{\mathcal P_{1}(\psi_{c})}.
\]
 \begin{figure}[h]
     \begin{center}
     \includegraphics[height=4cm, width=16 cm]{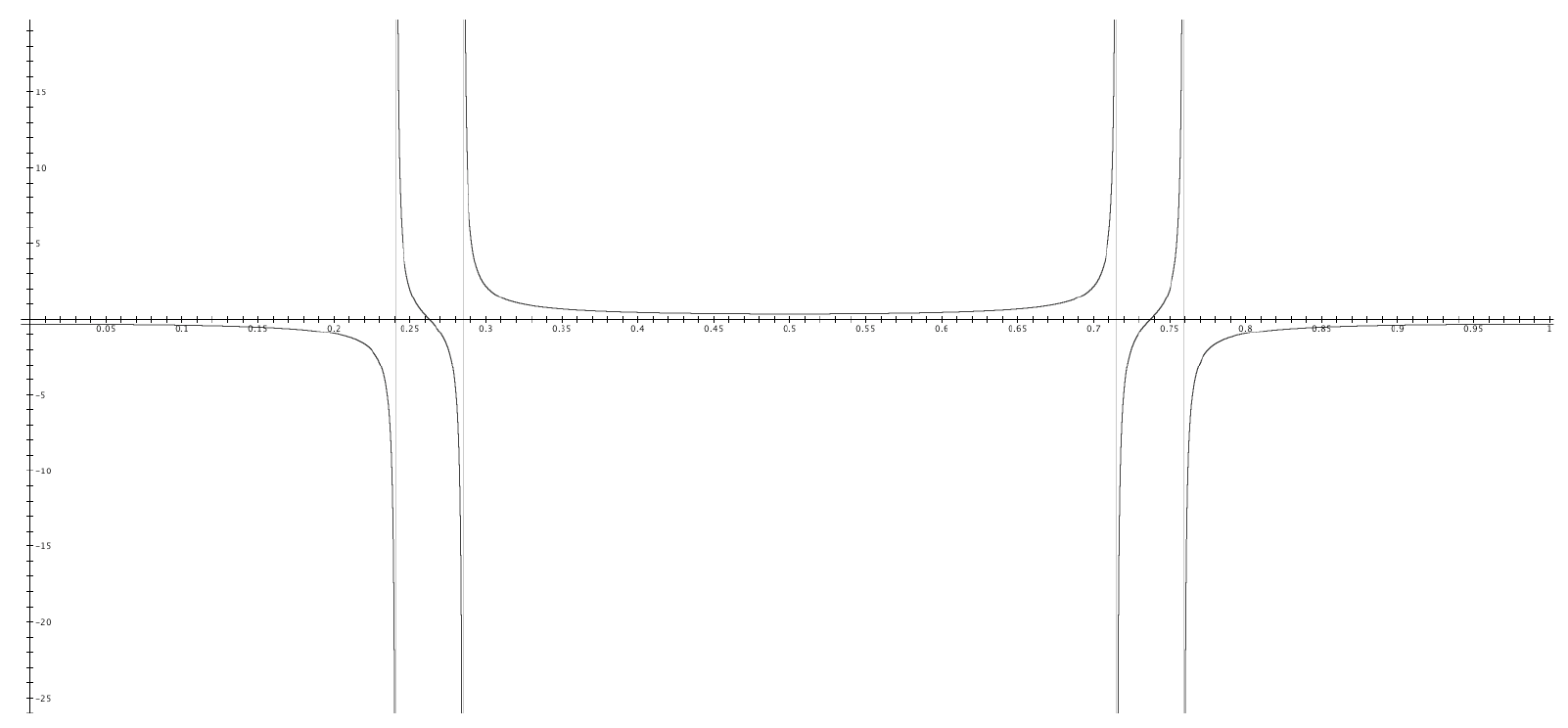}
     \end{center}
     \caption{Graph of \( \varphi(y) \). }\label{varphi}
     \label{graphvarphi}
  \end{figure} 
The function \( \varphi \) arises naturally and plays an important
role in the 
computation of hyperbolic coordinates 
and therefore we mention some key properties in the following 
    \begin{lemma}
    \( \varphi \) has 
   zeros at \( \delta^{*} \) and \( 1-\delta^{*} \), 
  asymptotes at \( \delta^{-}, \delta^{+}, 1-\delta^{+},
    1-\delta^{-} \), where  
\[
     \delta^{\mp} := \frac{1}{2\pi}\cos^{-1}\frac{-1\pm \sqrt{3}}{4\pi k}
    \quad 
    \text{ and } 
    \quad 
     \delta^{*} :=\frac{1}{2\pi}\cos^{-1}\frac{-1}{4\pi k}.
    \]
\( \varphi \) also 
    turning points at \( 0 \) and \( 1/2 \), with 
 \( 
    \varphi(0) \approx -1/k \) and \(
    \varphi(1/2) \approx 1/k \).
    \end{lemma}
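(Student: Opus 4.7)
The plan is to write $\varphi$ as a composition: $\varphi(y) = \tilde\varphi(\psi_c(y))$ where
\[ \tilde\varphi(\psi) = -\frac{\mathcal P_1'(\psi)}{\mathcal P_1(\psi)} = -\frac{4\psi+2}{2\psi^2+2\psi-1}, \]
and $\psi_c(y) = 2\pi k \cos(2\pi y)$. All three pieces of information then reduce to elementary algebra in $\psi$ followed by solving $\cos(2\pi y)=c$ for a couple of explicit values of $c$.

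First I would locate the zeros of $\varphi$. They coincide with the zeros of $\mathcal P_1'(\psi_c)$, i.e.\ with $\psi_c=-1/2$, which gives $\cos(2\pi y)=-1/(4\pi k)$ and, restricting $2\pi y$ to $[0,2\pi]$, the two solutions $y=\delta^*$ and $y=1-\delta^*$. Next I would read off the vertical asymptotes of $\tilde\varphi$ from the two real roots $\psi=(-1\pm\sqrt 3)/2$ of $\mathcal P_1$; substituting $\psi_c(y)$ and solving $\cos(2\pi y)=(-1\pm\sqrt 3)/(4\pi k)$ produces exactly the four values $\delta^-,\delta^+,1-\delta^+,1-\delta^-$. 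A sign check on the arguments of $\cos^{-1}$ (small positive for $-1+\sqrt 3$, small negative for $-1-\sqrt 3$) together with the monotonicity of $\cos^{-1}$ near $\pi/2$ confirms the ordering $\delta^-<\delta^*<\delta^+$.

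For the turning points I would apply the chain rule: $\varphi'(y)=\tilde\varphi'(\psi_c)\cdot\psi_c'(y)$. A short computation using the quotient rule gives
\[ \tilde\varphi'(\psi) = \frac{8(\psi^2+\psi+1)}{(2\psi^2+2\psi-1)^2} = \frac{8\,\mathcal P_2(\psi)}{\mathcal P_1(\psi)^2}, \]
and since $\mathcal P_2$ has no real roots, $\tilde\varphi'$ never vanishes. Hence the critical points of $\varphi$ are precisely the zeros of $\psi_c'(y)=-4\pi^2 k\sin(2\pi y)$, namely $y=0$ and $y=1/2$. Substituting $\psi_c(0)=2\pi k$ and $\psi_c(1/2)=-2\pi k$ into $\tilde\varphi$ and expanding in $1/k$ gives
\[ \varphi(0) = -\frac{8\pi k+2}{8\pi^2 k^2+4\pi k-1} = -\frac{1}{\pi k}+O(k^{-2}), \quad \varphi(1/2) = \frac{8\pi k-2}{8\pi^2 k^2-4\pi k-1} = \frac{1}{\pi k}+O(k^{-2}), \]
which are the claimed leading-order values.

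There is no real obstacle here: every step is either algebraic factoring of quadratics or inversion of $\cos$. The only place one must be careful is in matching the $\pm$ signs in the definitions of $\delta^\mp$ with the signs appearing in the roots of $\mathcal P_1$, and in checking that the $y$-values returned by $\cos^{-1}$ lie in the expected intervals $[0,1/2]$ or $[1/2,1]$ so that no spurious asymptotes or zeros are missed or double-counted.
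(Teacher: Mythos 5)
Your proposal is correct and follows essentially the same route as the paper: zeros from $\mathcal P_1'(\psi_c)=0$, asymptotes from the real roots of $\mathcal P_1$, and turning points via the chain rule after observing that $\mathcal P_2$ has no real roots so only $\psi_c'(y)=0$ contributes, with the values at $y=0,1/2$ obtained by substituting $\psi_c=\pm 2\pi k$. One minor caution: avoid the symbol $\tilde\varphi$ for the rescaled quotient, since the paper reserves it for a different function attached to $f_k^{-1}$; otherwise your computation (including the sharper leading constants $\mp 1/(\pi k)$, consistent with the paper's order-of-magnitude claim $\approx \mp 1/k$) matches the paper's argument.
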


    \begin{proof}
  Zeroes are given by solutions of \( \mathcal P_{1}'(\psi_{c})=0 \). 
  Thus gives 
   \( 2\psi_{c} + 1  = 0 \) and \( 2\pi k\cos (2\pi y) =
     -1/2 \), and therefore    \(
     y = \frac{1}{2\pi}\cos^{-1} \frac{-1}{4\pi k}
      \).     
  Asymptotes are given by  solutions of 
  \(  \mathcal P_{1}(\psi_{c})= 2\psi_{c}^{2}+2\psi_{c} -1 = 0
   \)
  and therefore are solutions 
    of  
 \(  
     \psi(y) = (-1\pm
     \sqrt 3)/2 
\) 
or \( 
     y= \frac{1}{2\pi}\cos^{-1} \frac{-1\pm \sqrt 3}{4\pi k},
     \) 
     which are exactly \(
     \delta^{-} \) and \( \delta^{+} \). 
  Finally, differentiating \( \varphi(y) \) we get 
  \begin{equation}\label{phiprime}
   \varphi' = \frac{d\varphi}{dy}=
   \frac{d\varphi}{d\psi_{c}}\frac{d\psi_{c}}{dy}
 = -\frac{\mathcal P_{1}''\mathcal P_{1}-\mathcal P_{1}'^{2}}
 {\mathcal P_{1}^{2}}\psi'_{c}
  = \frac{8 \mathcal P_{2}(\psi_{c})}
  {\mathcal P_{1}(\psi_{c})^{2}}\psi_{c}'
   \end{equation}
where  the last equality comes from the fact that 
\( \mathcal P_{1}''\mathcal P_{1}-\mathcal P_{1}'^{2} = 
-4(2\psi_{c}^{2}+2\psi_{c}-1)+(4\psi_{c}+2)(4\psi_{c}+2)
= 8(\psi_{c}^2+\psi_{c}+1) \). 
Since \( \mathcal P_{2} \) has no real solutions, 
turning points of \( \varphi \) are just the solutions to 
\( \psi_{c}'(y) = 0 \) which is equivalent to \( \sin 2\pi y = 0 \)
whose solutions are \( y=0 \) and \( y=1/2 \). We have 
\( \psi_{c}(0) = 2\pi k \) and \( \psi_{c}(1/2) = -2\pi k \) and so 
	\begin{equation}\label{minmaxvarphi}
\varphi(0) =  -\frac{8\pi k+2}{8\pi^{2}k^{2}+2\pi k -1}
\approx -\frac{1}{k}
\quad \text{ and } 
\varphi (1/2) = -\frac{-8\pi k+2}{8\pi^{2}k^{2}-2\pi k -1}
\approx \frac{1}{k}.
	\end{equation}
\end{proof}
We also let 
\[ 
\Delta := [\delta^{-}, \delta^{+}].  
\quad\text{ and } \quad 
1-\Delta := [1-\delta^{+}, 1-\delta^{-}].
\]
We note that 
\(
0<\delta^{-} < 1/4 < \delta^{*}< \delta^{+}<1/2 
\)
and 
\( 
\delta^{-},  \delta^{*}, \delta^{+}\to 1/4 
\text{ as }  k \to\infty\). In particular \( \Delta \) and \( 1-\Delta \)
shrink as \( 1/k \) increases and converge to the 
lines \(\{y= 1/4\} \) and \( \{y=3/4\} \) respectively.

\subsection{First order hyperbolic coordinates in forward time}

We are now ready to compute precisely the direction of hyperbolic
coordinates in forward time.  First of
all we define 
\begin{equation}
 \theta^{(1)}(y) = \begin{cases}
 \pi+ \frac{1}{2}\tan^{-1} \varphi(y) &\text{ if } y\in [0, \delta^{-}]
 \cup [1-\delta^{-}, 1]\\
 \frac{\pi}{2}+ \frac{1}{2}\tan^{-1} \varphi(y) &\text{ if } 
 y\in [\delta^{-}, \delta^{+}]\cup 
 [1-\delta^{+}, 1-\delta^{-}]\\
\frac{1}{2}\tan^{-1} \varphi(y) &\text{ if } y\in \left[\delta^{+},
1-\delta^{+}\right].
\end{cases}
\end{equation}
Then we have the following
\begin{proposition}\label{e1+}
For every \( k> 0 \), 
hyperbolic coordinates for \( f_{k} \) are defined at every point
  of \( (x,y) \in \mathbb T^{2} \) and depend only on \( y \). The
  vector 
 \[
e^{(1)}(y)=(\cos \theta^{(1)}(y), \sin\theta^{(1)}(y) )
\]  
is a unit vector in the most contracted direction for \( Df_{k} \) 
  at \( (x,y) \).  The most expanded direction is everywhere orthogonal to
  \( e^{(1)} \). 
\end{proposition}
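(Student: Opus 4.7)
The plan is to reduce the statement to a direct computation of the singular value decomposition of $Df(x,y)$. First, observe that by (\ref{Df}) the matrix $Df(x,y)$ depends only on $y$, which automatically gives the stated $y$-dependence of hyperbolic coordinates. To see that the decomposition is defined at \emph{every} point, I would compute $Df^{T}Df$ directly and note that its $(1,1)$ entry is $2$, so $Df^{T}Df\neq I$; combined with $\det Df = 1$ this rules out $Df$ ever being a rotation, so the two singular values are strictly distinct at every point of $\Tt$ and hyperbolic coordinates are defined everywhere.

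The main step is to compute the critical angles of the function $\theta\mapsto \|Df\,v\|^{2}$ for $v=(\cos\theta,\sin\theta)$. Expanding $v^{T}(Df^{T}Df)v$ and applying the double-angle identities gives
\[
\|Df\,v\|^{2} = \frac{2\mathcal{P}_{2}(\psi_{c})+1}{2} - \frac{\mathcal{P}_{1}(\psi_{c})}{2}\cos 2\theta + \frac{\mathcal{P}_{1}'(\psi_{c})}{2}\sin 2\theta,
\]
using the definitions (\ref{poly}) together with the identity $\mathcal{P}_{1}'(\psi_{c}) = 2(1+2\psi_{c})$. Differentiating in $\theta$, the critical-point equation reduces to
\[
\mathcal{P}_{1}(\psi_{c})\sin 2\theta + \mathcal{P}_{1}'(\psi_{c})\cos 2\theta = 0,
\]
which is exactly $\tan 2\theta = \varphi(y)$. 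Its four solutions in $[0,2\pi)$ pair into two orthogonal subspaces, one realising the minimum $E_{1}(z)$ and the other the maximum $F_{1}(z)$ of $\|Df\,v\|$.

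The last step is to identify the minimising subspace. A short calculation shows that at any critical angle
\[
\frac{d^{2}}{d\theta^{2}}\|Df\,v\|^{2} = \frac{2\cos 2\theta}{\mathcal{P}_{1}(\psi_{c})}\bigl(\mathcal{P}_{1}(\psi_{c})^{2}+\mathcal{P}_{1}'(\psi_{c})^{2}\bigr),
\]
so $\theta$ realises the minimum precisely when $\cos 2\theta$ has the same sign as $\mathcal{P}_{1}(\psi_{c})$. The preceding lemma identifies the asymptotes of $\varphi$ with the zeros of $\mathcal{P}_{1}(\psi_{c})$, so the sign of $\mathcal{P}_{1}(\psi_{c})$ is positive on $[0,\delta^{-})\cup(\delta^{+},1-\delta^{+})\cup(1-\delta^{-},1]$ and negative on $(\delta^{-},\delta^{+})\cup(1-\delta^{+},1-\delta^{-})$. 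A case-by-case check then verifies that the prescribed shifts of $\frac{1}{2}\tan^{-1}\varphi$ by $0$, $\pi/2$, and $\pi$ in the three regions place $2\theta^{(1)}$ in the arc where $\cos 2\theta^{(1)}$ has the required sign. The orthogonality of the most expanded direction to $e^{(1)}$ then follows from the orthogonality of the eigenspaces of the symmetric matrix $Df^{T}Df$.

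The main obstacle is really just this bookkeeping: tracking the three cases, together with their $y\mapsto 1-y$ reflections, while remembering that $\tan^{-1}$ takes values only in $(-\pi/2,\pi/2)$, so the branch jumps of $\tan^{-1}\varphi$ at the asymptotes $\delta^{\pm}$ must be absorbed by the explicit $\pi/2$ and $\pi$ shifts in the definition of $\theta^{(1)}$. Once the second-derivative test is reconciled with the choice of branch in each region, the formula for $\theta^{(1)}$ drops out of the case analysis.
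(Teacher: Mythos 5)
Your proposal is correct, and its core is the same as the paper's: both reduce the problem to the critical-point equation for $\theta\mapsto\|Df\,v\|$, arriving at $\tan 2\theta=\varphi(y)$ (your double-angle expansion of $\|Df\,v\|^{2}$ in terms of $\mathcal P_{1},\mathcal P_{2}$ checks out, and your existence argument via the $(1,1)$ entry of $Df^{T}Df$ being $2$ is just the paper's computation $\|Df(1,0)\|=\sqrt2$ combined with $\det Df=1$ in different clothing). Where you genuinely diverge is in how the contracting branch is singled out. The paper anchors the choice at the single value $y=0$, showing by a direct estimate that vectors near the vertical are expanded by order $k$, and then propagates the choice by continuity ("analytic continuation"), switching the additive constant $0,\pi/2,\pi$ each time $\varphi$ crosses an asymptote. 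You instead use a pointwise second-derivative test, valid at every $y$ simultaneously: the minimiser is characterised by $\cos 2\theta$ having the sign of $\mathcal P_{1}(\psi_{c})$, and a case check confirms the piecewise definition of $\theta^{(1)}$ satisfies this in each of the three regions. This buys a more self-contained and uniform verification (no large-$k$ asymptotics, no continuation argument), at the cost of the sign bookkeeping you describe; the paper's route, conversely, makes the geometric rotation picture of Corollary \ref{e1+cor} emerge naturally along the way. Two small points you should still address: at the four boundary values where $\mathcal P_{1}(\psi_{c})=0$ your second-derivative formula degenerates (there $\cos2\theta=0$ and one must use $-2\mathcal P_{1}'\sin2\theta$ directly, or continuity of $e^{(1)}$); and, like the paper, your region decomposition implicitly requires $k$ large enough that $\delta^{\pm},\delta^{*}$ are defined, even though the bare existence statement does hold for all $k>0$.
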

A graph of \( \theta^{(1)}(y) \) is shown below. Once again this is
shown for clarity for small \( k \), as \( k \) increases the graph
approaches a step function.
\begin{figure}[h]
 \begin{center}
     \includegraphics[width=16cm, height=4cm]{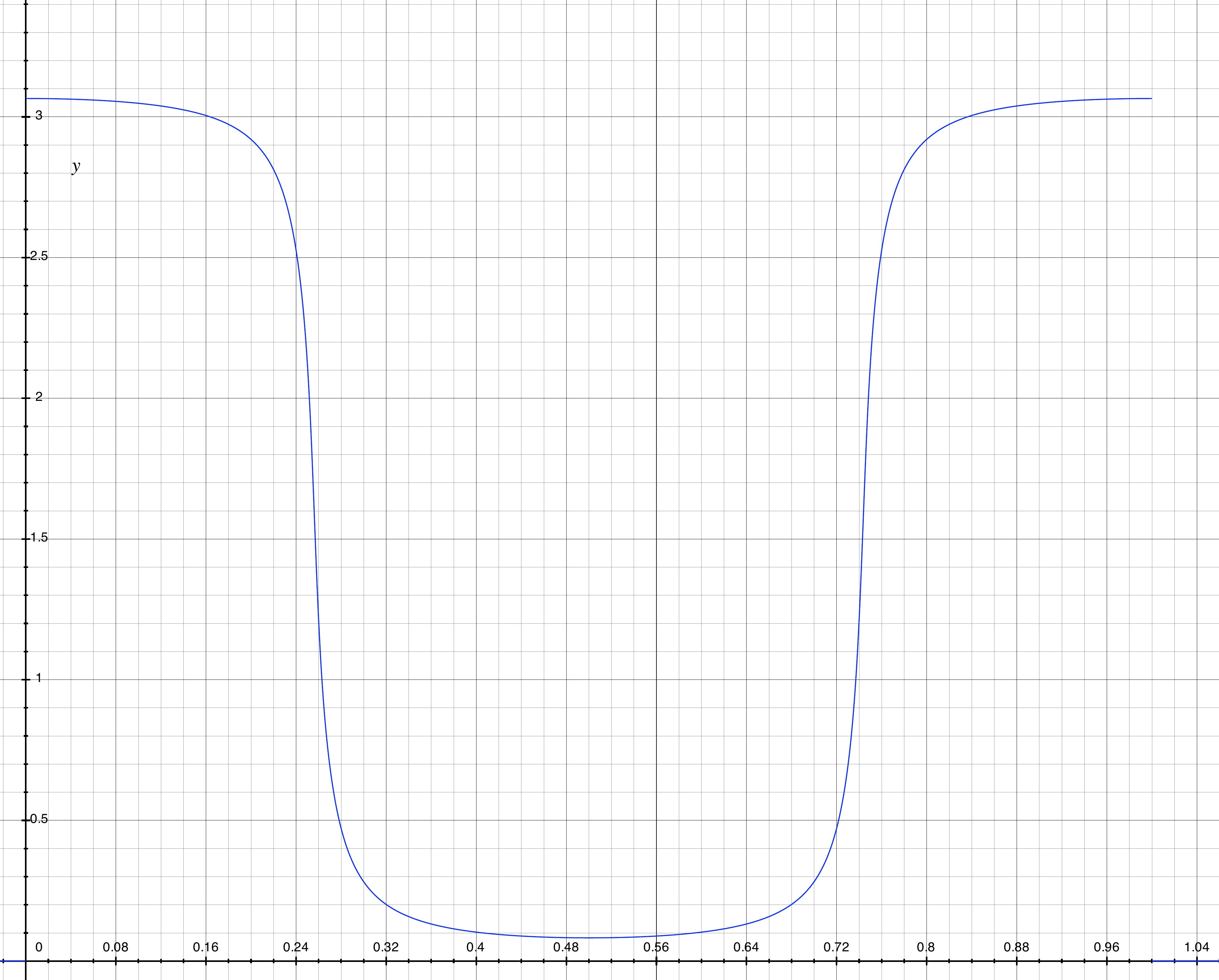}
 \end{center}
\caption{Graph of \( \theta^{(1)}(y) \)} 
\label{theta1}
\end{figure}

An analysis of the properties of the function \( \theta^{(1)} \)
  gives the following 
  
  \begin{corollary}\label{e1+cor}
 \( e^{(1)} \) rotates monotonically
  \textbf{clockwise} in \( [0,1/2] \) between an angle of \( \theta^{(1)}(0)
  \approx \pi - 1/k \) close to the negative horizontal semi-axis at \( 
  y=0 \) to an angle of \( \theta^{(1)}(1/2) \approx 1/k \) close to
  the positive horizontal semi-axis at \( y=1/2 \), 
\textbf{swinging rapidly} between the negative diagonal at
  \( y=\delta^{-} \), through the vertical at \( y=\delta^{*} \), to the
  positive diagonal at \( y=\delta^{+} \). 
  Then 
  \( e^{(1)} \) rotates  monotonically
   \textbf{counter-clockwise} in \( [1/2, 1] \) between an angle of 
   \( \theta^{(1)}(1/2) \approx 1/k \) close to
   the positive horizontal semi-axis at \( y=1/2 \), 
   to and angle  \( \theta^{(1)}(1) 
   = \theta^{(1)}(0) \approx \pi - 1/k \) close to the negative horizontal 
   semi-axis at \( 
   y=1=0 \), \textbf{swinging rapidly} between the 
   positive diagonal at \( y=1-\delta^{+} \), 
   through the vertical at \( y=1- \delta^{*} \), to the
   negative diagonal at
   \( y=1- \delta^{-} \).
\end{corollary}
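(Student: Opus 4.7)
The plan is to unpack the piecewise definition of $\theta^{(1)}(y)$ and read off all claimed qualitative behaviour directly from the behaviour of $\varphi(y)$ established in the preceding lemma. I would argue separately on the three subintervals $[0,\delta^-]$, $[\delta^-,\delta^+]$ and $[\delta^+,1-\delta^+]$ making up $[0,1/2]$; the interval $[1/2,1]$ then follows from the symmetry $\psi_c(1-y)=\psi_c(y)$, hence $\varphi(1-y)=\varphi(y)$, together with the fact that the piecewise formula for $\theta^{(1)}$ is itself symmetric about $y=1/2$.

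On each subinterval I first pin down the sign of $\varphi$ and its monotonic behaviour. The lemma tells me that $\varphi$ has turning points only at $0$ and $1/2$, so on each of the three subintervals $\varphi$ is monotonic; writing $\varphi=-\mathcal P_1'(\psi_c)/\mathcal P_1(\psi_c)$ and tracking the signs of numerator and denominator as $\psi_c$ decreases from $2\pi k$ to $-2\pi k$, I find that $\varphi$ decreases from $\varphi(0)\approx -1/k$ to $-\infty$ on $[0,\delta^-]$, decreases from $+\infty$ through $0$ (at $\delta^*$) to $-\infty$ on $[\delta^-,\delta^+]$, and decreases from $+\infty$ to $\varphi(1/2)\approx 1/k$ on $[\delta^+,1/2]$. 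Since $\tan^{-1}$ is monotone increasing, on each piece $\theta^{(1)}$ is therefore strictly decreasing. Evaluating limits at the endpoints gives the values $\pi-1/(2k)$, $3\pi/4$, $\pi/2$, $\pi/4$, $1/(2k)$ at $y=0,\delta^-,\delta^*,\delta^+,1/2$ respectively. A quick check at the gluing points $\delta^\pm$ shows that the different constant offsets in the piecewise definition ($\pi$, $\pi/2$, $0$) are exactly the ones required to absorb the jump of $\tfrac12\tan^{-1}\varphi$ by $\pi/4$ across each asymptote, so $\theta^{(1)}$ is continuous on $[0,1/2]$ and the values $3\pi/4$ (negative diagonal) and $\pi/4$ (positive diagonal) are indeed attained at $\delta^-$ and $\delta^+$.

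A strictly decreasing $\theta^{(1)}$ on $[0,1/2]$ means the unit vector $e^{(1)}$ rotates monotonically clockwise over this interval, and the endpoint values give the near-horizontal alignment. The ``swinging rapidly'' statement then follows from the quantitative observation that $\delta^+-\delta^-=O(1/k)$ (immediate from the definitions of $\delta^\pm$ and the mean value theorem applied to $\cos^{-1}$), while inside this tiny interval $\theta^{(1)}$ varies from $3\pi/4$ through $\pi/2$ to $\pi/4$, a total swing of $\pi/2$. For $y\in[1/2,1]$, the symmetries noted above convert each of these statements into its mirror image: $\theta^{(1)}$ is now strictly increasing, the orientation of rotation flips to counter-clockwise, and the roles of the positive and negative diagonals at $1-\delta^+$ and $1-\delta^-$ are swapped relative to the first half-period.

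The only mildly delicate point, and the one I would double-check most carefully, is the bookkeeping of signs of $\varphi$ immediately on either side of the asymptotes $\delta^\pm$: because $\mathcal P_1(\psi_c)$ changes sign there while $\mathcal P_1'(\psi_c)$ does not, $\varphi$ jumps from $-\infty$ to $+\infty$ rather than vice versa, and the corresponding constant offsets in the piecewise definition must match this jump. Apart from this and the verification that no internal turning points occur inside the relevant subintervals (guaranteed by the lemma), the argument is a direct unpacking of Proposition~1 and Lemma~1.
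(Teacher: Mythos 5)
Your argument is correct and follows essentially the same route as the paper: there, too, everything is read off the properties of \( \varphi \) from Lemma~1, with monotonicity on \( [0,1/2] \) and \( [1/2,1] \) obtained from \( \frac{d\theta^{(1)}}{dy}=\tfrac{1}{2}\varphi'/(1+\varphi^{2}) \), whose only zeros are at \( y=0,1/2 \), and the diagonal and vertical alignments identified from \( |\varphi|=\infty \) at \( \delta^{\pm},1-\delta^{\pm} \) and \( \varphi=0 \) at \( \delta^{*},1-\delta^{*} \). Your piecewise sign-tracking of \( \varphi \) and the continuity check at the gluing points \( \delta^{\pm} \) is just a slightly more explicit rendering of the same computation (the continuity having already been arranged in the proof of Proposition~1), so no further comment is needed.
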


\begin{proof}[Proof of Proposition \ref{e1+}]
    We start by showing that hyperbolic coordinates exist for all \( k \)
    and for all points in \( \mathbb T^{2} \). 
We then give a general formula for hyperbolic coordinates (which 
implies in particular that the most contracted and the most expanded
directions are always orthogonal). We then substitute the explicit
expressions related to our setting into this formula. Finally we
establish which of the directions given by the formula is actually
the contracting direction and which is the expanding direction. 

\subsubsection{Existence of hyperbolic coordinates}

Since $\det(Df(z))\equiv 1$ it is sufficient
to show that for every \( k \) and for every point \( z\in \mathbb T^{2} \)
there is \emph{some}
vector $v\in T_z\mathbb T^{2}$ which is expanding. Indeed, for any \( 
k \) and any \( z \) we have  
\[ 
\|Df_{z}(1,0)\| = \left\|\begin{pmatrix} 1 & \psi_{c} \\ 1 & 1+
\psi_{c}\end{pmatrix} \begin{pmatrix} 1 \\ 0 \end{pmatrix}\right\| =
\|(1,1)\| = \sqrt{2} > 1.
\]

\subsubsection{General formula for hyperbolic coordinates}
Given \( z\in \mathbb T^{2} \), 
contracting and expanding directions are computed
exactly as follows. 
Letting 
\(
v=(\cos \theta, \sin \theta)
\) denote a general vector, 
the most contracted and most expanded directions  are 
solutions to
the differential equation 
\[
\frac{d\|Df_{z}(\sin \theta, \cos \theta)\|}{d\theta} = 0. 
\] 
If the left hand side of this equation is not identically equal to zero, we 
obtain the relation
\begin{equation}\label{contrdir}
\tan 2\theta  =
\frac{2 (\pfi x1\pfi y1 +\pfi x2\pfi y2)}
{(\pfi x1)^2+(\pfi x2)^2 - (\pfi y1)^2 -(\pfi y2)^2}.
\end{equation}
Notice that \( \tan2\theta \) is a periodic function with period \(
\pi/2 \) and therefore defines 2 orthogonal directions; it does not,
however,  distinguish the most contracting from the
 most expanding direction. 

\subsubsection{Specific formula for hyperbolic coordinates}
We can now substitute the partial derivatives of \( f \)
 into \eqref{contrdir}. Since the derivative of \( f \) depends only
 on the \( y \) coordinate, it follows that the hyperbolic coordinates
 depend only on \( y \). 
 Using the definition of \( \varphi \), we get 
\begin{equation}\label{e1forward}
\tan{2\theta(z)} =  
\frac{2(\psi_{c}+1+\psi_{c})}{1+1-\psi_{c}^{2}-(1+\psi_{c})^{2}}
=\frac{2(2\psi_{c} +1)}{1-2\psi_{c} (\psi_{c}
+1)}=\varphi(y) 
\end{equation}
 As mentioned above, this equation defines two
orthogonal directions without distinguishing the expanding and the
contracting one. Naively inverting \( \tan \) to get 
\begin{equation}\label{tan}
\theta = \frac{1}{2}\tan^{-1}\varphi(y)  
\end{equation}
fixes one of these solutions, namely the one belonging to \( (-\pi/4,
\pi/4) \), which, in general, may be a contracting or an expanding
direction.  
Moreover, as \( y \) changes, the hyperbolic coordinates change and
equation \eqref{tan} may pick out the contracting directions for some 
values of \( y \) and the expanding directions for others. 

\subsubsection{Establishing the contracting and the expanding
directions}
We fix first of all a single value of \( y \), for simplicity let \(
y=0 \). From \eqref{minmaxvarphi} we have \( \varphi (0) \approx -1/k \)
for large \( k \), and therefore, from \eqref{tan}, 
\[ 
\theta(0) = \frac{1}{2}\tan^{-1}\left(-\frac{1}{k}\right) 
\approx -\frac{1}{k}.
\]
This means that one of the solutions of \eqref{e1forward} has very
small negative slope. A simple calculation shows that this is indeed
the contracting direction. To see this we shall show 
that  every vector \(
v=(\cos\theta, \sin\theta) \) with \( \theta\in (\pi/4, 3\pi/4) \) is 
significantly expanded: recall first that for \( y=0 \) we
have \( \psi_{c}(0) = 2\pi k \cos 0 = 2\pi k \), then 
for all \( \iota\in (-1, 1) \) we have 
\[ 
\|Df_{y=0}(\iota, 1)\| = 
\left\|\begin{pmatrix} 1 & \psi_{c} \\ 1 & 1+
\psi_{c}\end{pmatrix} \begin{pmatrix} \iota \\ 1 \end{pmatrix}\right\| 
=
\left\| 
\begin{pmatrix}
\iota + 2\pi k \\ 
\iota+1+2\pi k
\end{pmatrix} \right\| 
\gtrsim k
\]
This implies that the contracting direction is actually almost
horizontal (and with negative slope). We now have two choices for
defining \( \theta^{(1)} \) and the
unit vector \( e^{(1)} \). It seems simpler, for the rest of the
discussion to choose 
\( \theta^{(1)} = \pi + \frac{1}{2}\tan^{-1}\varphi(0) \) giving a
most contracted unit vector close to the negative horizontal semi-axis.

\subsubsection{Analytic continuation of the most contracted direction}
To obtain a complete formula for \( \theta^{(1)} \) we need to keep
track of the rotation of \( e^{(1)} \) as \( y \) increases. Indeed,
if the hyperbolic coordinates changed direction so that the most
expanded direction corresponded to angles \( \theta\in (-\pi/4, \pi/4) \)
then the formula would start picking up the wrong direction. 

We observe therefore first of all that \( \varphi \) is decreasing in \( 
y \) at \(  y=0\) and that this corresponds to a clockwise rotation of
\( e^{(1)} \) as \( y \) increases near 0. For \( y=\delta^{-} \) we
have \( |\varphi(y)| = \infty \) which corresponds to the hyperbolic
coordinates being aligned with the diagonal. As \( y \) crosses \(
\delta^{-1} \),  the
equation \( \frac{1}{2}\tan^{-1}\varphi(y) \) which always only picks 
up the solution inside \( [-\pi/4, \pi/4] \) switches from
picking up one of the 
most contracting directions to picking up one of 
the most expanding directions. 
To realize the continuity of \( e^{(1)} \) 
(hyperbolic coordinates depend smoothly on the position) we 
 change the definition to \( \theta^{(1)}=
\frac{\pi}{2}+\frac{1}{2}\tan^{-1}(\varphi(y)) \). A similar change
occurs as \( y \) crosses \( \delta^{+} \) and then \( 1-\delta^{+} \)
and \( 1-\delta^{-} \).
\end{proof}

\subsubsection{Geometry of the most contacting direction}

A more detailed analysis along the lines of the arguments given above,
allow us to give a detailed description of how that most contracting
unit vector \( e^{(1)} \) depends on the point \( y \).

\begin{proof}[Proof of Corollary \ref{e1+cor}]
 To find the turning points of \( \theta^{(1)} \) we    
calculate derivatives with respect to \( y \) and get 
\[ 
\frac{d\theta^{(1)}}{dy}=\frac{1}{2}\frac{\varphi'}{(1+\varphi^2)}.
\]
From the proof of Proposition \ref{e1+} we have that 
 the only zeros of \( \varphi' \) are 
 \( y=0 \) and \( y=1/2 \).  Moreover, 
 at  \( y=0 \) we have \( \psi_{c}(y) = 2\pi k \cos (2\pi y) = 2 \pi k \)
       and therefore 
       \[ 
	\theta^{(1)}(0)
	=\pi+\frac{1}{2}\tan^{-1}\frac{8\pi k + 2}{1-8\pi^{2}k^{2}-4\pi k}
	\approx \pi - \frac{1}{k}
       \]
     and st \( y=1/2 \) we have \( \psi_{c}(y)  = -2 \pi k \) and therefore
     \[ 
      \theta^{(1)}\left(\frac{1}{2}\right) 
      = \frac{1}{2}\tan^{-1}\frac{-8\pi k + 2}{1-8\pi^{2}k^{2}+4\pi k} 
      \approx \frac{1}{k}
       \]  
    For \( y \) equal to \( \delta^{\pm}, 1-\delta^{\pm} \) we have \(
    |\varphi(y)| = \infty \) which corresponds precisely to being aligned 
    with the diagonals as stated. For \( y \) equals \( \delta^{*} \) and
    \( 1-\delta^{*} \) we have \( \varphi(y)=0 \) which corresponds
    precisely to \( e^{(1)}(y) \) being vertical as stated.
    
 \end{proof}

\subsection{First order foliations in forward time}

The geometry of the stable and unstable foliations \( \mathcal E^{(1)} \)
and \( \mathcal F^{(1)} \) now follows by a careful consideration of
the position of the vectors \( e^{(1)} \) as described in Corollary
\ref{e1+cor}.

\section{Backward time foliations}
\label{bacfol}
We now carry out an analogous analysis of the inverse map
\begin{equation}\label{finv}
f_{k}^{-1}\begin{pmatrix}x\\ y\end{pmatrix}=
\begin{pmatrix}x-k\sin 2\pi(y-x)  \\ y-x 
\end{pmatrix} \mod 1
\end{equation}
in order to
obtain the contracting and expanding foliations \( \mathcal E^{(-1)} \)
and \( \mathcal F^{(-1)} \) in backward time.

\subsection{Definitions and notation}
Hyperbolic coordinates in backward time are  constant along
``diagonals'', i.e. lines of slope 1. We therefore start by
introducing a notations which will simplify our
analysis: let 
\[ 
\tilde y = y-x \mod 1.
\] 
Lines of the form \( \{\tilde y = c\} \) have unit slope and  intersect 
the \( y \)-axis at \( y=c \). 
Thus, when it is convenient to do so, 
we will write the coordinates of a point 
in the form  \( (\tilde y, y) \)
meaning that this point lies at the intersection of the horizontal
line through \( (0,y) \) (in standard coordinates) 
with the diagonal through \( (0, \tilde y) \) (in standard
coordinates). We shall often switch between standard and diagonal
coordinates sometimes even using both within the same expression. To
avoid confusion we shall use a \ \( \tilde{} \) \  to indicate
variables and
functions of variables in ``diagonal coordinates''. We let 
\[ 
\tilde\psi_{c}= 2\pi k\cos (2\pi \tilde y) 
\quad \text{ and } \quad 
\tilde \psi_{s}= 2\pi k\sin (2\pi \tilde y)
\]
and write the derivative of \( f^{-1} \) as 
\[ 
Df_{k}^{-1}(x,y) =
\begin{pmatrix}1+\p\cos(2\pi(y-x))&-\p\cos(2\pi(y-x))\\-1&1\end{pmatrix}
    =
\begin{pmatrix}1+ \tilde \psi_{c} & - \tilde 
\psi_{c} \\ -1 & 1 
\end{pmatrix}.
\]
We also define 
\[   \tilde\varphi = \tilde\varphi (\tilde y) = 
  -\frac{2(\tilde\psi_{c}^{2}+\tilde\psi_{c}+1)}{1+2\tilde\psi_{c}} 
  = -\frac{2\mathcal P_{2}(\tilde\psi_{c})}{\mathcal
  P_{2}'(\tilde\psi_{c})}\]  
  The graph of \( \tilde\varphi(\tilde y)) \) is sketched in Figure
  \ref{tildevarphik1} below.
  \begin{figure}[h]
      \begin{center}
      \includegraphics[width=16cm, height=4cm]{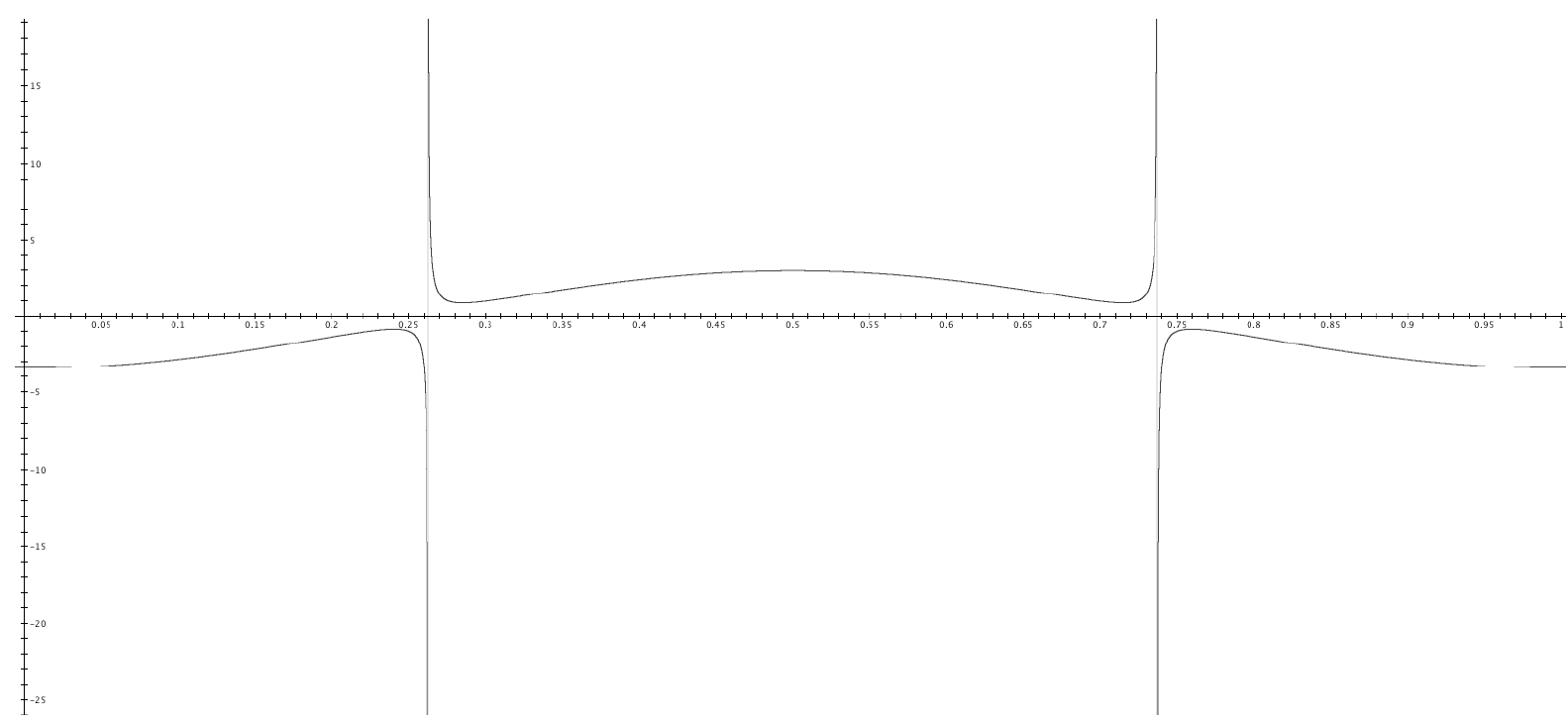}
      \caption{Graph of \( \tilde\varphi (\tilde y) \).}
      \label{tildevarphik1}
   \end{center}   
    \end{figure}  
    \begin{lemma}\label{l:tildevarphi}
     \( \tilde\varphi \) has no zeroes, two asymptotes, at \( \delta^{*} \)
     and \( 1-\delta^{*} \), and six turning points, at \( 0, \delta^{-},
     \delta^{+}, 1/2, 1-\delta^{+}, 1-\delta^{-} \), with 
     \[ 
     \tilde\varphi (0) \approx -k, 
     \quad
     \tilde\varphi (1/2) \approx k, 
 \] and \[
     \tilde\varphi(\delta^{-}) = \tilde\varphi(1-\delta^{-}) =
     -\frac{\sqrt 3}{2}, 
     \quad 
     \tilde\varphi(\delta^{+}) = \tilde\varphi(1-\delta^{+}) =
     \frac{\sqrt 3}{2}.
     \]
   \end{lemma}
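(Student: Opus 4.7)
The proof will closely parallel the analysis of $\varphi$ carried out earlier, exploiting the fact that $\tilde\varphi=-2\mathcal P_{2}(\tilde\psi_{c})/\mathcal P_{2}'(\tilde\psi_{c})$ has the same formal structure as $\varphi=-\mathcal P_{1}'(\psi_{c})/\mathcal P_{1}(\psi_{c})$ but with the roles of the numerator and denominator polynomials swapped. First, for the zeroes: $\tilde\varphi(\tilde y)=0$ forces $\mathcal P_{2}(\tilde\psi_{c})=0$, but $\mathcal P_{2}$ has no real roots (as recorded in \eqref{poly}), so $\tilde\varphi$ has no zeroes. For the asymptotes: the denominator vanishes precisely when $\mathcal P_{2}'(\tilde\psi_{c})=1+2\tilde\psi_{c}=0$, i.e. $2\pi k\cos(2\pi\tilde y)=-1/2$, which gives exactly $\tilde y=\delta^{*}$ and $\tilde y=1-\delta^{*}$.

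For the turning points I would differentiate using the chain rule, as in \eqref{phiprime}:
\[
\tilde\varphi'(\tilde y)=\frac{d\tilde\varphi}{d\tilde\psi_{c}}\,\tilde\psi_{c}'
=-2\,\frac{(\mathcal P_{2}')^{2}-\mathcal P_{2}\mathcal P_{2}''}{(\mathcal P_{2}')^{2}}\,\tilde\psi_{c}'.
\]
A direct expansion gives $(\mathcal P_{2}')^{2}-\mathcal P_{2}\mathcal P_{2}''=(2\tilde\psi_{c}+1)^{2}-2(\tilde\psi_{c}^{2}+\tilde\psi_{c}+1)=2\tilde\psi_{c}^{2}+2\tilde\psi_{c}-1=\mathcal P_{1}(\tilde\psi_{c})$, so
\[
\tilde\varphi'(\tilde y)=-\frac{2\mathcal P_{1}(\tilde\psi_{c})}{\mathcal P_{2}'(\tilde\psi_{c})^{2}}\,\tilde\psi_{c}'.
\]
Thus the turning points split into two families: those coming from $\tilde\psi_{c}'=-4\pi^{2}k\sin(2\pi\tilde y)=0$, giving $\tilde y\in\{0,1/2\}$; and those coming from $\mathcal P_{1}(\tilde\psi_{c})=0$, giving $\tilde\psi_{c}=(-1\pm\sqrt 3)/2$, hence $\cos(2\pi\tilde y)=(-1\pm\sqrt 3)/(4\pi k)$, which produces exactly the four values $\delta^{-},\delta^{+},1-\delta^{+},1-\delta^{-}$. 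This accounts for all six turning points.

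To evaluate $\tilde\varphi$ at these points I would simply substitute. At $\tilde y=0$ we have $\tilde\psi_{c}=2\pi k$, yielding
\[
\tilde\varphi(0)=-\frac{2(4\pi^{2}k^{2}+2\pi k+1)}{1+4\pi k}\sim -2\pi k,
\]
and symmetrically $\tilde\varphi(1/2)\sim 2\pi k$, both of order $k$ with the stated signs. At $\tilde y=\delta^{\pm}$ (and their reflections $1-\delta^{\pm}$) we have $\mathcal P_{1}(\tilde\psi_{c})=0$, so one can compute $\mathcal P_{2}(\tilde\psi_{c})$ and $\mathcal P_{2}'(\tilde\psi_{c})$ directly from the explicit value $\tilde\psi_{c}=(-1\pm\sqrt 3)/2$ and take their ratio; the sign of $\mathcal P_{2}'(\tilde\psi_{c})=\pm\sqrt 3$ is opposite on the two branches, which produces the claimed opposite signs of $\tilde\varphi$ at $\delta^{-}$ versus $\delta^{+}$. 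The symmetry $\tilde y\mapsto 1-\tilde y$ of $\cos(2\pi\tilde y)$, and hence of $\tilde\psi_{c}$, immediately yields the equalities $\tilde\varphi(\delta^{\pm})=\tilde\varphi(1-\delta^{\pm})$.

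None of the steps presents a serious obstacle; the only mildly delicate point is the algebraic identity $(\mathcal P_{2}')^{2}-\mathcal P_{2}\mathcal P_{2}''=\mathcal P_{1}$, which is what makes the six turning points clean and links the current lemma to the earlier analysis of $\varphi$. After this identity the rest of the proof is bookkeeping analogous to the proof of the previous lemma.
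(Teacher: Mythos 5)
Your proposal is correct and follows essentially the same route as the paper's own proof: no zeroes because \( \mathcal P_{2} \) has no real roots, asymptotes from \( \mathcal P_{2}'(\tilde\psi_{c})=0 \), turning points via the chain rule together with the identity \( (\mathcal P_{2}')^{2}-\mathcal P_{2}\mathcal P_{2}''=\mathcal P_{1} \), and direct substitution of \( \tilde\psi_{c}=2\pi k \), \( -2\pi k \), \( (-1\pm\sqrt 3)/2 \) for the values (your signs for \( d\tilde\varphi/d\tilde\psi_{c} \) and \( \tilde\psi_{c}' \) are in fact more careful than the paper's, whose two sign slips cancel). One minor remark: if you carry out your final ratio with the factor \( 2 \) in the stated definition \( \tilde\varphi=-2\mathcal P_{2}(\tilde\psi_{c})/\mathcal P_{2}'(\tilde\psi_{c}) \), you get \( \tilde\varphi(\delta^{\pm})=\pm\sqrt 3 \) rather than the lemma's \( \pm\sqrt 3/2 \); this factor-of-two discrepancy is already present in the paper, whose proof silently drops the \( 2 \) at that step, so it reflects an inconsistency in the source rather than a gap in your argument.
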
  
     We note that the value of \( \tilde\varphi \) at 
     the turning points  \( \delta^{\pm} \) and \( 1-\delta^{\pm} \)
 do not depend on \( k \), whereas the
     values at the turning points  \( 0, 1/2 \) 
     do depend on \( k \) and approach \( 
     \pm\infty \) as \( k \to \infty \).
	 
     \begin{proof}
\( \tilde\varphi \) has no zeros since \( \mathcal P_{2} \) has no
real solutions. 	 
    Asymptotes  are 
    solutions of \( \mathcal P_{2}'(\psi_{c})=0 \) 
    which amounts to \(  \tilde\psi(\tilde y) = -{1}/{2} \) or 
       \(
       \tilde y=\frac{1}{2}\cos^{-1}{-\frac{1}{4\pi k}}
       \),  which correspond precisely to \( \tilde y = \delta^{*} \)
       and \( \tilde y = 1-\delta^{*} \). 
   To compute the turning points of \( \tilde\varphi \) we
   differentiate to get 
       \begin{equation}\label{derivtildephi}
       \tilde\varphi'=\frac{d\tilde\varphi}{d\tilde y} 
       =
       \frac{d\tilde\varphi}{d\tilde\psi_{c}}\frac{d\tilde\psi_{c}}{d\tilde
       y}
   = \frac{2(\mathcal P_{2}'^{2}-\mathcal P_{2}\mathcal
   P_{2}')}{\mathcal P_{2}'^{2}}   \frac{d\tilde\psi_{c}}{d\tilde
       y}  = \frac{2\mathcal P_{1}(\tilde\psi_{c})}{\mathcal
       P_{2}'^{2}(\tilde\psi_{c})}
       \tilde\psi_{c}'
   \end{equation}
 where   
 \( \mathcal P_{2}'^{2}-\mathcal P_{2}\mathcal P_{2}'' 
 = (2\tilde\psi_{c}+1)^{2}- 2(\tilde\psi_{c}^{2}+\tilde\psi_{c}+1) 
 = 2\tilde\psi_{c}^{2}+2\tilde\psi_{c}-1 = \mathcal
 P_{1}(\tilde\psi_{c})\)
%
and 
       \(
       \tilde\psi'(\tilde y) =(2\pi)^{2}k\sin 
       (2\pi \tilde y) . 
       \)
       Therefore \( \tilde\varphi'=0 \)  if 
       \(
       2\psi^2+2\psi-1 = 0
       \) 
       which gives \(  \psi(\tilde y) =  -(1\pm\sqrt{3})/2 \) and thus
       \(
       \tilde y= \{\delta^{-}, \delta^{+}, 1-\delta^{+}, 
       1-\delta^{-}\}
       \) 
       or \( \tilde\psi'=0 \) which gives 
       \(
       \tilde y = \sin^{-1} 0 = \{0, 1/2\}.
       \)
   To compute the images of these turning points,
   we have \( \tilde\psi(0) = 2\pi k \) and
   \( \tilde\psi(1/2) = 12\pi k \)
   and so 
   \(\tilde\varphi (0) \approx -k \) 
   and
   \( \tilde\varphi (1/2) \approx -k. \)
  \enlargethispage{1cm}
  By the
   definition of \( \tilde\psi  \)  we can compute  
   \(
   \tilde\psi (\delta^{-} ) = 2\pi k \cos \left( 2\pi
   \left(\tfrac{1}{2\pi}\cos^{-1}((-1+\sqrt 3)/(4\pi k))\right)\right) =
   (-1+\sqrt 3)/2
   \)
   and also
   \(
   \tilde\psi (\delta^{+} ) = 2\pi k \cos \left( 2\pi
   \left(\frac{1}{2\pi}\cos^{-1}(-1-\sqrt 3)/(4\pi k)\right)\right) =
   -(1+\sqrt 3)/(2)
   \). 
  Therefore so 
   \( 
   \tilde\varphi(\delta^{-}) 
   =-(\tilde\psi^{2}+\tilde\psi+1)/(1+2\tilde\psi)
   = -\sqrt 3/2 
   \) 
and 
\( 
   \tilde\varphi(\delta^{+}) = \sqrt 3/2 
   \) as required.
   \end{proof}

\subsection{First order hyperbolic coordinates in backward time}
We are now ready to compute hyperbolic
coordinates in backward time. First of all we define 
\begin{equation}\label{thetaminus1}
 \theta^{(-1)}(\tilde y) = 
    \begin{cases}
\frac{\pi}{2} + \frac{1}{2}\tan^{-1} (\tilde\varphi (\tilde y)) &\text{ if } 
\tilde y\in [0, \delta^{*}]\cup [1-\delta^{*}, 1]\\
\frac{1}{2}\tan^{-1} (\tilde\varphi (\tilde y)) &\text{ if } 
\tilde y  \in [\delta^{*}, 1-\delta^{*}]
\end{cases}
\end{equation}
Then we have the following

\begin{proposition}\label{e1-}
    For every \( k>0 \), hyperbolic coordinates for \( f_{k}^{-1} \)
    are defined at every point \( (x,y)\in \mathbb T^{2} \) and depend
    only on \( \tilde y\). The vector 
    \[
    e^{(-1)}(z)  = (\sin\theta^{(-1)}(\tilde y),
    \cos \theta^{-1}(\tilde y))
    \]
    is a unit vector in the most contracted direction for \( Df_{k}^{-1} \)
    at \( (x,y) \). 
\end{proposition}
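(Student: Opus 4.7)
I will proceed in exact parallel with the four-step strategy of Proposition \ref{e1+}: first verify existence of hyperbolic coordinates everywhere, then extract the general angle equation from \eqref{contrdir}, then pin down the contracting branch at one convenient value of $\tilde y$, and finally analytically continue across the two asymptotes of $\tilde\varphi$ predicted by Lemma \ref{l:tildevarphi}.

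For existence, since $f$ is area-preserving, $\det Df_k^{-1}\equiv 1$, so it suffices to display one expanded vector at every point. Applied to $(1,0)$, the matrix $Df_k^{-1}$ written above gives $(1+\tilde\psi_c,-1)$, of norm at least $\sqrt 2$ for every $(x,y)$. Dependence only on $\tilde y$ is immediate because every entry of $Df_k^{-1}$ is a function of $\tilde\psi_c=\tilde\psi_c(\tilde y)$ alone. Substituting the four partial derivatives into \eqref{contrdir}, the numerator becomes
\[
2\bigl((1+\tilde\psi_c)(-\tilde\psi_c)+(-1)(1)\bigr)=-2(\tilde\psi_c^{2}+\tilde\psi_c+1)=-2\mathcal P_2(\tilde\psi_c),
\]
while the denominator is $(1+\tilde\psi_c)^{2}+1-\tilde\psi_c^{2}-1=1+2\tilde\psi_c=\mathcal P_2'(\tilde\psi_c)$, so $\tan 2\theta=\tilde\varphi(\tilde y)$. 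This identifies the two orthogonal hyperbolic directions modulo $\pi/2$ but does not yet distinguish contraction from expansion.

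To pick out the contracting branch, evaluate at $\tilde y=0$, where $\tilde\psi_c=2\pi k$. A direct computation gives $Df_k^{-1}(1,1)=(1,0)$, of norm one, whereas $Df_k^{-1}(1,-1)=(1+4\pi k,-2)$ has norm of order $k$; since these two vectors are orthogonal, the diagonal direction (angle $\pi/4$) is the most contracted at $\tilde y=0$. With $\tilde\varphi(0)\approx -k$ by Lemma \ref{l:tildevarphi}, the choice $\theta^{(-1)}=\frac{\pi}{2}+\frac{1}{2}\tan^{-1}\tilde\varphi$ yields $\frac{\pi}{2}-\frac{\pi}{4}=\frac{\pi}{4}$, and the prescribed $(\sin\theta^{(-1)},\cos\theta^{(-1)})=(\tfrac{\sqrt 2}{2},\tfrac{\sqrt 2}{2})$ aligns with the diagonal, as required. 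The unusual $(\sin,\cos)$ parameterization is natural here precisely because $e^{(-1)}$ hovers near the diagonal throughout the torus. This establishes the formula on the interval containing $\tilde y=0$.

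Finally, extend by continuity using Lemma \ref{l:tildevarphi}: $\tilde\varphi$ has no zeros on $[0,1]$ and its only singularities are the asymptotes at $\tilde y=\delta^{*}$ and $\tilde y=1-\delta^{*}$. At each asymptote $\tilde\varphi$ passes through $\pm\infty$, so $\tfrac12\tan^{-1}\tilde\varphi$ jumps by $\pi/2$, and subtracting this offset (absorbed in the piecewise definition \eqref{thetaminus1}) restores continuity of $e^{(-1)}$. The step requiring the most care, and the main obstacle, is checking that after the branch switch at $\delta^{*}$ the bare formula $\tfrac12\tan^{-1}\tilde\varphi$ still selects the contracting direction on the middle interval $[\delta^{*},1-\delta^{*}]$ rather than the expanding one. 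I would verify this at $\tilde y=1/2$, where $\tilde\psi_c=-2\pi k$ and $\tilde\varphi(1/2)\approx k$, giving $\tfrac12\tan^{-1}\tilde\varphi\approx\pi/4$ and again $(\sin,\cos)=(\tfrac{\sqrt 2}{2},\tfrac{\sqrt 2}{2})$; the computation $Df_k^{-1}(1,1)=(1,0)$ of norm one, versus orthogonal vectors stretched by order $k$, confirms contraction and hence validates \eqref{thetaminus1} on the middle interval. The two asymptotes being the only candidates for branch changes, this completes the proof on the whole torus.
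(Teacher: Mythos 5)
Your proposal follows essentially the same route as the paper's proof: substituting the partial derivatives of \( f^{-1} \) into \eqref{contrdir} to get \( \tan 2\theta=\tilde\varphi(\tilde y) \), fixing the contracting branch at \( \tilde y=0 \) by showing that vectors near the negative diagonal are expanded by order \( k \), and continuing analytically across the only asymptotes \( \delta^{*} \), \( 1-\delta^{*} \), which are the only places the branch can switch. Your additional verification at \( \tilde y=1/2 \) (using \( Df_k^{-1}(1,1)=(1,0) \), which in fact holds identically in \( \tilde y \)) is a sensible reinforcement of the paper's assertion that on the middle interval the bare formula picks up the contracting direction.

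There is, however, one step that fails as written: the existence argument. You claim that \( Df_k^{-1}(1,0)=(1+\tilde\psi_c,-1) \) has norm at least \( \sqrt 2 \) at every point. Since \( \tilde\psi_c=2\pi k\cos(2\pi\tilde y) \) sweeps through all of \( [-2\pi k, 2\pi k] \), whenever \( \tilde\psi_c\in(-2,0) \) the norm \( \sqrt{(1+\tilde\psi_c)^2+1} \) is strictly below \( \sqrt 2 \), and on the lines where \( \tilde\psi_c=-1 \) it equals exactly \( 1 \); at those points your test vector exhibits no expansion at all, so the \( \det\equiv 1 \) argument gives nothing there. The fix is immediate: either test the negative diagonal, \( Df_k^{-1}(1,-1)=(1+2\tilde\psi_c,-2) \), whose norm is at least \( 2 \) for every \( \tilde y \), or argue as the paper does, namely that the singular values of \( Df^{-1}_z \) are the reciprocals of those of \( Df_{f^{-1}(z)} \), so existence in backward time is inherited directly from Proposition \ref{e1+}. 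A smaller point of wording: at \( \tilde y=0 \) the images of the orthogonal pair \( (1,1) \), \( (1,-1) \) show only that, of the two candidate directions determined by \( \tan 2\theta=\tilde\varphi \), the most contracted is the one near angle \( \pi/4 \); it does not show that the diagonal itself is the most contracted direction (indeed \( \theta^{(-1)}(0)\gtrsim\pi/4 \)). Your conclusion is unaffected, since branch selection is all that is needed, and this matches the level of precision of the paper's own argument.
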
 
    
A graph of \( \theta^{(-1)}(\tilde y) \) is shown in Figure \ref{theta1back}
below. 
\begin{figure}[h]
    \begin{center}
	\includegraphics[width=16cm, height=4cm]{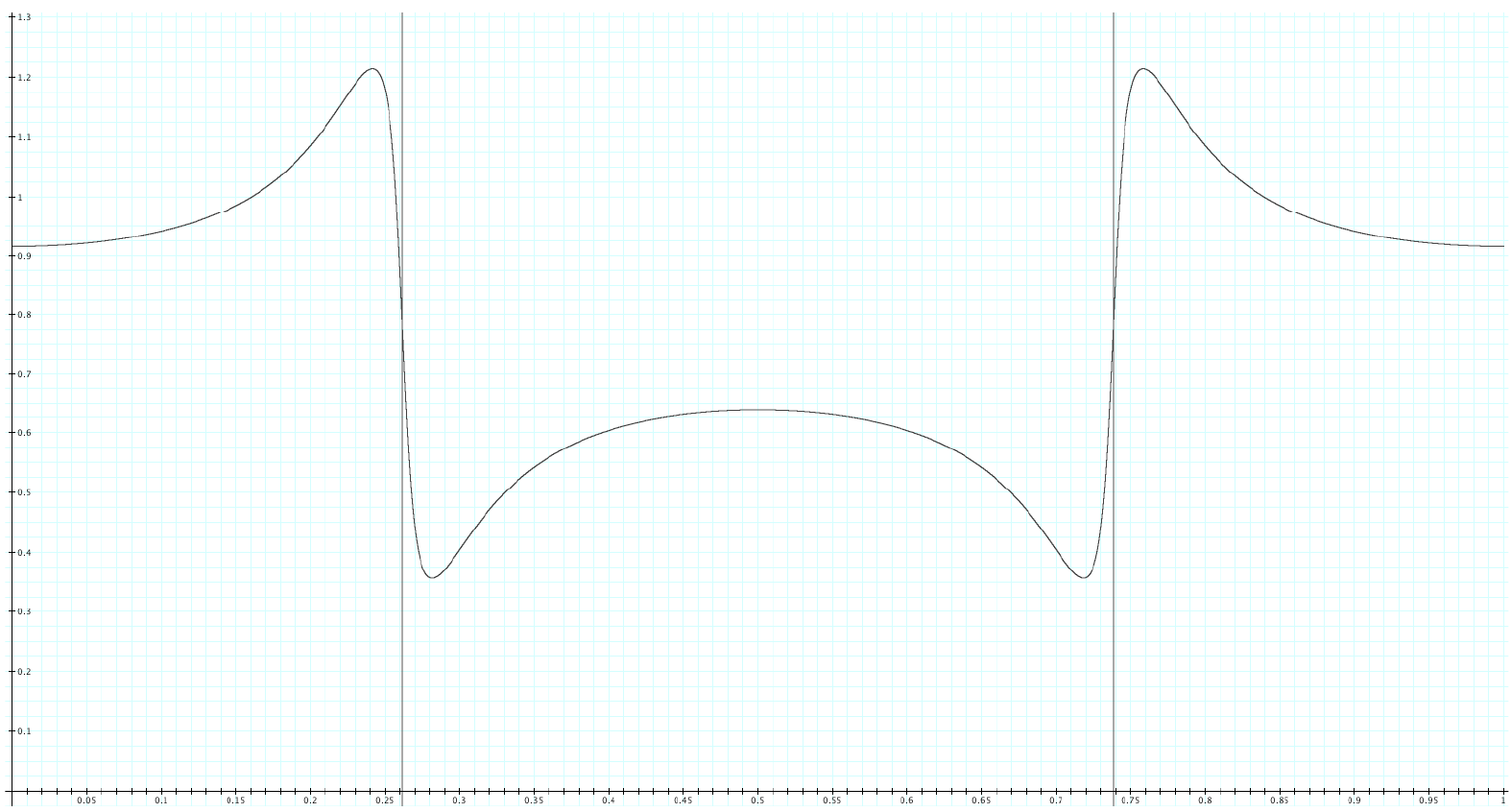}
\end{center}
\caption{Graph of \( \theta^{(-1)}(\tilde y) \).}
\label{theta1back}
\end{figure}

\begin{corollary}\label{e1-cor}
  \( e^{(-1)} \) rotates \textbf{counter-clockwise} in \( (0,
  \delta^{-1}) \) from \( \theta^{(-1)}(0)\gtrsim \pi/4 \)
  (just above the positive diagonal) for \( \tilde 
  y = 0 \), with \( \theta^{(-1)}(0) \searrow \pi/4 \)  as \( k \to
  \infty \), to an angle \( \theta^{(-1)}(\delta^{-}) \gtrsim 3\pi/8
  \), independent of  \( k \), then \textbf{swings rapidly} 
  \textbf{clockwise} in \( (\delta^{-}, \delta^{+}) \) to an angle \( 
  \theta^{(-1)}(\delta^{+}) \lesssim \pi/8 \), independent of \( k
  \), crossing the positive diagonal at \( \tilde y = \delta^{*} \), 
  then rotates \textbf{counter-clockwise}   in \( (\delta^{+}, 1/2) \)
  to an angle \( \theta^{(-1)}(1/2) \lesssim \pi/4 \) (just below the 
  positive diagonal)  
  with \( \theta^{(-1)}(1/2) \nearrow \pi/4 \)  as \( k \to
    \infty \). The process continues in \( (1/2, 1) \) in reverse in 
    a perfectly symmetric manner.
   
\end{corollary}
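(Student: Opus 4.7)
The plan is to mimic the strategy used in the proof of Corollary \ref{e1+cor}, namely to differentiate $\theta^{(-1)}$ and read off monotonicity from the sign of $\tilde\varphi'$, using Lemma \ref{l:tildevarphi} to locate the turning points and the critical values of $\tilde\varphi$. On each of the intervals determined by the formula for $\theta^{(-1)}$, I have
\[
\frac{d\theta^{(-1)}}{d\tilde y} = \frac{1}{2}\frac{\tilde\varphi'(\tilde y)}{1+\tilde\varphi(\tilde y)^{2}},
\]
so that the sign of this derivative coincides with the sign of $\tilde\varphi'$; by Lemma \ref{l:tildevarphi} the latter changes sign only at the six turning points $0,\delta^{-},\delta^{+},1/2,1-\delta^{+},1-\delta^{-}$, and these automatically decompose the circle into intervals of monotonicity for $\theta^{(-1)}$.

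Next I would determine the sign of $\tilde\varphi'$ on each piece by inspecting the values of $\tilde\varphi$ at the endpoints supplied by Lemma \ref{l:tildevarphi}. On $(0,\delta^{-})$, $\tilde\varphi$ runs from $\tilde\varphi(0)\approx -k$ up to $\tilde\varphi(\delta^{-})=-\sqrt 3/2$, so it is increasing and $\theta^{(-1)}$ rotates counter-clockwise; evaluating the first case of \eqref{thetaminus1} at the endpoints gives $\theta^{(-1)}(0)=\pi/2+\tfrac12\tan^{-1}(-k)\gtrsim \pi/4$, tending to $\pi/4$ as $k\to\infty$, and $\theta^{(-1)}(\delta^{-})=\pi/2+\tfrac12\tan^{-1}(-\sqrt3/2)\gtrsim 3\pi/8$, independent of $k$. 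On $(\delta^{-},\delta^{*})$, $\tilde\varphi$ decreases from $-\sqrt 3/2$ to $-\infty$, and on $(\delta^{*},\delta^{+})$ it decreases from $+\infty$ to $+\sqrt 3/2$, so $\theta^{(-1)}$ is monotone decreasing on both pieces; the values $\theta^{(-1)}(\delta^{+})=\tfrac12\tan^{-1}(\sqrt3/2)\lesssim \pi/8$ and the common one-sided limit $\theta^{(-1)}(\delta^{*\pm})=\pi/4$ give the ``swings rapidly clockwise, crossing the positive diagonal at $\delta^{*}$'' part of the statement. On $(\delta^{+},1/2)$, $\tilde\varphi$ increases from $\sqrt 3/2$ to $\approx k$, so $\theta^{(-1)}$ again rotates counter-clockwise, and $\theta^{(-1)}(1/2)=\tfrac12\tan^{-1}(k)\nearrow \pi/4$ from below as $k\to\infty$.

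The main subtlety, and the step I would treat with most care, is the junction at $\tilde y=\delta^{*}$, where $\tilde\varphi$ has an asymptote and the defining formula for $\theta^{(-1)}$ switches from the first to the second case of \eqref{thetaminus1}. I would verify continuity directly: as $\tilde\varphi\to -\infty$ the first formula gives $\theta^{(-1)}\to \pi/2-\pi/4=\pi/4$, and as $\tilde\varphi\to +\infty$ the second gives $\theta^{(-1)}\to \pi/4$, so the two branches match and the monotone decrease of $\theta^{(-1)}$ across $(\delta^{-},\delta^{+})$ is genuine; this also justifies the qualitative ``rapid swing'' since $\tan^{-1}(\tilde\varphi)$ sweeps across almost the full interval $(-\pi/2,\pi/2)$ as $\tilde y$ traverses a narrow neighbourhood of $\delta^{*}$.

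Finally, I would observe the evident symmetry $\tilde\psi_{c}(1-\tilde y)=\tilde\psi_{c}(\tilde y)$, which implies $\tilde\varphi(1-\tilde y)=\tilde\varphi(\tilde y)$ and therefore the same symmetry for $\theta^{(-1)}$ on the complementary interval $(1/2,1)$; combined with the case distinction in \eqref{thetaminus1} this produces the mirror-image behaviour asserted in the corollary. No new technical obstacle arises beyond the careful bookkeeping of signs and of the case switch at the asymptote.
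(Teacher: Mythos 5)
Your proposal is correct and follows essentially the same route as the paper's own proof: differentiate $\theta^{(-1)}$, use Lemma \ref{l:tildevarphi} to locate the turning points and the values of $\tilde\varphi$ there, and read off the rotation directions and endpoint angles interval by interval. Your explicit matching of the two branches of \eqref{thetaminus1} at the asymptote $\tilde y=\delta^{*}$ and the symmetry $\tilde\varphi(1-\tilde y)=\tilde\varphi(\tilde y)$ are just careful spellings-out of points the paper leaves implicit (the case switch having been built into the definition of $\theta^{(-1)}$ in Proposition \ref{e1-}), not a different argument.
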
  

\begin{proof}[Proof of Proposition \ref{e1-}]
 The argument follows along the same lines as the proof of Proposition
 \ref{e1+}. The
 existence of hyperbolic coordinates everywhere follows directly from 
 the analogous statement in forward time since \( e^{(-1)}(z) \) is
 exactly the image of the most expanded direction \(
 f^{(1)}(f^{-1}(z)) \) under \( Df_{f^{-1}(z)} \). Then, substituting 
 the corresponding partial derivatives into \eqref{contrdir} we get 
 \[ 
 \tan 2 \theta = \frac{2(-\tilde \psi_{c}(1+\tilde\psi_{c})
 -1)}{(1+\tilde\psi_{c})^{2}+1-\tilde\psi_{c}^{2}-1}=
 -\frac{2(\tilde\psi_{c}^{2} + \tilde\psi_{c}+1)}{2\tilde\psi_{c}+1}
 =\tilde\varphi(\tilde y).
 \]
 For \( \tilde y = 0 \)  we have \( \tilde \psi_{c} = 2\pi k \cos 2
 \pi 0 = 2\pi k \) and so this gives 
 \(
 \tan 2 \theta = -2(4\pi^{2}k^{2}+2\pi k + 1)/(4\pi k + 1)
 \to -\infty \) as \( k\to \infty \) and so, taking \( \tan^{-1} \) on
both sides, this gives  \( \theta \gtrsim -\pi/4 \)
 with \( \theta \searrow - \pi/4 \) as \( k \to \infty \). Thus, for
 large \( k \), the hyperbolic coordinates are almost aligned with the
 diagonals. To establish which direction is contacted and which
 direction expanded we note first of all that for 
 \( \tilde\psi_{c}(0) = 2\pi k \) and therefore, 
 applying the derivative at \( \tilde y =0 \) to a 
 vector which is close to the negative diagonal we get 
 \[ 
 Df_{\tilde y=0}^{-1}(1\pm \varepsilon, -1) 
 = \begin{pmatrix}
 1+\tilde\psi_{c} & -\tilde\psi_{c} \\ -1 & 1
 \end{pmatrix}
 \begin{pmatrix} 1 \pm \varepsilon \\ -1\end{pmatrix}
     =
     \begin{pmatrix}
      1+2\pi k & -2\pi k \\ -1 & 1
      \end{pmatrix}
      \begin{pmatrix} 1 \pm \varepsilon \\ -1\end{pmatrix}   
\approx   \begin{pmatrix} 4\pi k \\ - 2\end{pmatrix} 
 \]
The region around the negative diagonal is therefore 
clearly not the contracting direction, and thus we 
define \( \theta^{(-1)}(0) = \pi/2 + \tan^{-1}\tilde\varphi (0) \).
Since the most contracting direction depends smoothly on \( \tilde y \)
this definition continues to work as long as the hyperbolic
coordinates do not swing ``through'' the diagonals, which happens
exactly when \( \tan 2 \theta = \pm \infty \) or \( 2\tilde\psi_{c}+1 
= 0 \), i.e. for \( \tilde y = \delta^{*} \) and \( \tilde y = 1-
\delta^{*} \).  Between these two points the formula picks up the
contracting direction, which now lies in the correct quadrant, and we 
obtain the statement with the definition of \( \theta^{(-1)} \) as
given above. 
 \end{proof}   

 \subsubsection{Geometry of the most contracting directions in
 backward time}  
 As in the forward time case, we now carry out a more detailed
 analysis of the function \( e^{(-1)} \) to determine the variation
 of \( \theta^{(-1)} \) on \( \tilde y \). 
 
\begin{proof}[Proof of Corollary \ref{e1-cor}]
 To find the turning points of \( \theta^{(-1)} \) we calculate the
 derivative. From \eqref{derivtildephi} we have   
     \[ 
 \frac{d\theta^{(-1)}}{dy} =
 \frac{1}{2}\frac{\tilde\varphi'}{1+\tilde\varphi^{2}}   
 = \frac{\mathcal P_{1}(\tilde\psi_{c}) \tilde\psi_{c}'}
 {\mathcal P_{2}'^{2}(1+\tilde\varphi^{2})}
     \]
  which immediately gives the six turning points at \( 0, 1/2 \) from \( 
  \tilde\psi_{c}=0 \) and at \( \delta^{-}, \delta^{+}, 1-\delta^{-} \)
  and \( 1-\delta^{+} \) from \( \mathcal P_{1}=0 \). We compute the
  value of \( \theta^{(-1)} \) at these turning points. 
At \( \tilde y = 0 \) we have \( \tilde\psi(0) = 2\pi k \) and
\( \tilde\varphi (0) \approx -k  \) and therefore, 
from \eqref{thetaminus1},
\[ \theta^{(-1)}(0)=\frac{\pi}{2}+\frac{1}{2}\tan^{-1}(\tilde\varphi(0))
 \gtrsim \frac{\pi}{2}+\frac{1}{2}\tan^{-1}(-k) \gtrsim \frac{\pi}{4}.
 \]
 Thus \( e^{(-1)} \) lies just slightly above the positive diagonal,
 tending to the positive diagonal as \( k \) increases. 
 For \( \tilde y \) between \( 0 \) and \( \tilde y = \delta^{-}\),
 \( \tilde\varphi(\tilde y) \) increases and 
 \( e^{(-1)} \) rotates \emph{counterclockwise}. At 
 \( \tilde y = \delta^{-} \) (and \( \tilde y = 1-\delta^{-} \)) we have 
 (see Lemma \ref{l:tildevarphi}) 
 \(
 \tilde\varphi(\delta^{-}) =
 -\sqrt 3/2 \) and so 
 \[
 \theta^{(-1)} = \frac{\pi}{2}+\frac{1}{2}\tan^{-1}\left(-\frac{\sqrt
 3}{2}\right)
 \gtrapprox \frac{\pi}{2} - \frac{\pi}{8} = \frac{3\pi}{8}
 \]
 We emphasize that this value is independent of \( k \). 
 Past \( \delta^{-} \), \( \tilde\varphi(
 \tilde y) \) decreases and \( e^{(-1)} \) rotates clockwise. At \(
 \tilde y = \delta^{+} \) (and \( \tilde y = 1- \delta^{+} \)) we have 
\( \tilde\varphi (\delta^{+}) = \sqrt 3/2 \) and so 
\[ \theta^{(-1)} = \frac{1}{2}\tan^{-1}\frac{\sqrt 3}{2}
\lessapprox \frac{\pi}{8}. \]
 This value is also independent of \( k \) so the  vector \( e^{(-1)} \)
 swings through a \emph{fixed} angle of almost \( \pi/4 \) as \(
 \tilde y \) ranges in  the 
 interval \( [\delta^{-}, \delta^{+}] \) whose length of the order \(
 1/k \) and which \emph{shrinks to 0} as \( k \to 
 \infty \).  Finally, the direction of \( e^{(-1)} \) changes again
 and now rotates in a counter-clockwise sense. At \( \tilde y = 1/2 \)
 we have \( \tilde\varphi \approx -k \) and so 
 \[ 
 \theta^{(-1)}(1/2)=\frac{1}{2}\tan^{-1}(\tilde\varphi(1/2)) 
 \lessapprox \frac{1}{2}\tan^{-1}k \lessapprox \frac{\pi}{4}.
 \] 
\end{proof}

\subsection{First order foliations in backwards time}

The geometry of the stable and unstable foliations \( \mathcal E^{(-1)} \)
and \( \mathcal F^{(-1)} \) now follows by a careful consideration of
the position of the vectors \( e^{(-1)} \) as described in Corollary
\ref{e1-cor}.

\section{Curves of tangencies}

In this section we prove Theorem \ref{th:tang}. 
Recall first of all that 
\( \mathcal C^{(1)} \)  is, by definition, the locus of points 
\( z \)  for which 
\begin{equation}\label{e1=e-1}
e^{(1)}(z) = e^{(-1)}(z). 
\end{equation}
Our strategy is to obtain an explicit formula for this set by equating the
explicit formulas for \( e^{(1)}(z) \) and \( e^{(-1)}(z) \), although
we emphasize that 
this does not follow simply by substituting the corresponding 
expressions into \eqref{e1=e-1} as they are piecewise defined in various 
regions of the torus and also \( e^{(1)} \) is given as a function of \( 
y \) whereas \( e^{(-1)} \) is given as a function of \( \tilde y \); 
some non-trivial analysis is therefore required. We shall continue to 
use the definitions and notation of the previous section, although we 
recall some of the definitions here for clarity.  
First of all, writing \( e^{(1)}=(\cos \theta^{(1)}, \sin \theta^{(1)}) \) and 
\( e^{(-1)}=(\cos \theta^{(-1)}, \sin \theta^{(-1)}) \) the equation \( 
e^{(1)}(z) = e^{(-1)}(z) \) reduces to the equation 
\begin{equation}\label{equalangles}
\theta^{(1)}(z) = \theta^{(-1)}(z).
\end{equation}
We start by 
dividing the torus \( \mathbb T^{2} \) into two regions. 
In one region we show directly that no tangencies can occur, in the
other we show that they can be computed through a simplified formula.

\begin{lemma}
There are no tangencies
in \( \{(x,y)\in \mathbb T^{2}: y\in [0,
\delta^{-}]\cup[1-\delta^{-},1] \}\).
\end{lemma}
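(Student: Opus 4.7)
The plan is to compare the explicit piecewise formulas for $\theta^{(1)}$ and $\theta^{(-1)}$ directly on the strip in question and show that they force the two angles into disjoint arcs of $S^{1}$, so that the equation $e^{(1)}(z)=e^{(-1)}(z)$, equivalently $\theta^{(1)}(z)=\theta^{(-1)}(z)$, admits no solution on this strip.

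First I would fix $y\in[0,\delta^{-}]$; the symmetric strip $[1-\delta^{-},1]$ is handled identically. On this interval the first branch of the piecewise definition of $\theta^{(1)}$ applies, giving $\theta^{(1)}(y)=\pi+\tfrac{1}{2}\tan^{-1}\varphi(y)$. From the properties of $\varphi$ established in the lemma following its definition, on $[0,\delta^{-}]$ the function $\varphi$ is smooth and strictly monotone, with $\varphi(0)\approx -1/k$ and $\varphi(y)\to -\infty$ as $y\to\delta^{-}$. Hence $\tfrac{1}{2}\tan^{-1}\varphi(y)$ decreases monotonically from near $0$ down to $-\pi/4$, and $\theta^{(1)}(y)$ lies in the closed interval $[3\pi/4,\pi-1/(2k)+o(1/k)]$, strictly inside the open second quadrant.

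Next I would establish that $\theta^{(-1)}(\tilde y)$ is uniformly confined to the open first quadrant, regardless of the value of $\tilde y$. Combining the two-branch definition \eqref{thetaminus1} with the turning-point values $\tilde\varphi(\delta^{\pm})=\mp\sqrt 3/2$ supplied by Lemma \ref{l:tildevarphi} and the sign of $\tilde\varphi$ near its asymptotes at $\tilde y=\delta^{*},1-\delta^{*}$, a branch-by-branch inspection -- this is essentially already contained in Corollary \ref{e1-cor} -- shows that the global extrema of $\theta^{(-1)}$ are attained at $\tilde y=\delta^{\pm}$ and $1-\delta^{\pm}$, where they take the values $\pi/2\pm\tfrac{1}{2}\tan^{-1}(\sqrt 3/2)$, both lying inside $(\pi/8,3\pi/8)$ with a $k$-independent margin.

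The lemma then follows at once: the closed intervals $[3\pi/4,\pi]$ and $[0,\pi/2]$ are disjoint, and they remain disjoint after translating one of them by $\pm\pi$, so $\theta^{(1)}(y)\ne\theta^{(-1)}(\tilde y)$ for every $z=(x,y)$ in the strip. I do not expect a real obstacle here: the entire argument is essentially a direct graphical comparison of Figures \ref{theta1} and \ref{theta1back}, and the only point requiring care is to stay on the correct branch of $\theta^{(\pm 1)}$ through the asymptotes of $\varphi$ and $\tilde\varphi$, which has already been resolved in the preceding sections.
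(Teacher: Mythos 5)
Your proposal is correct and is essentially the paper's own proof: the paper simply observes that \( e^{(-1)} \) always lies in the open quadrant \( \{x>0,\,y>0\} \) while \( e^{(1)} \) lies in \( \{x<0,\,y>0\} \) for \( y\in[0,\delta^{-}]\cup[1-\delta^{-},1] \), which is exactly your disjointness of the angle ranges \( [3\pi/4,\pi) \) and \( (0,\pi/2) \) (your extra check of the anti-parallel case included). The only slip is cosmetic: the extreme values of \( \theta^{(-1)} \) are \( \tfrac12\tan^{-1}(\sqrt3/2) \) and \( \pi/2-\tfrac12\tan^{-1}(\sqrt3/2) \), i.e.\ approximately \( \pi/8 \) and \( 3\pi/8 \) but lying just \emph{outside} \( (\pi/8,3\pi/8) \) (cf.\ Corollary \ref{e1-cor}), which is harmless since all your argument needs is containment in \( (0,\pi/2) \) with a \( k \)-independent margin.
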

\begin{proof}
    From the equations defining the contracting directions \( e^{(1)}\)
   and  \( e^{(-1)} \) it follows that \( e^{(-1)} \) \emph{always}
   lies in the \( \{x>0, y>0\} \) quadrant  whereas, for 
   \( y \in [0, \delta^{-}]\cup[1-\delta^{-},1]  \) the vector \(
   e^{(-1)} \) lies in the quadrant \( \{x<0, y>0\} \). Thus they can 
   never be aligned and there can be no tangencies in this region. 
\end{proof}     

\begin{lemma}
Tangencies in 
\( \{(x,y): y\in
[\delta^{-}, 1-\delta^{-}]\} \) are given by
solutions to 
\( \varphi(y) = \tilde\varphi (\tilde y) \).  
\end{lemma}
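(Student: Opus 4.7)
The plan is to convert the geometric tangency condition into a single scalar equation by applying a double-angle identity to the two angular formulas for $\theta^{(1)}(y)$ and $\theta^{(-1)}(\tilde y)$ obtained in Propositions \ref{e1+} and \ref{e1-}.

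First, I would note that because $e^{(1)}$ and $e^{(-1)}$ are unit vectors, a tangency between the two foliations at a point $z=(x,y)$ is equivalent to the angular identity
\[
\theta^{(1)}(y) \equiv \theta^{(-1)}(\tilde y) \pmod{\pi}.
\]

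Next, I would observe that on the region $y\in [\delta^{-}, 1-\delta^{-}]$ the two middle branches of the definition of $\theta^{(1)}$ in Proposition \ref{e1+} both take the form $\theta^{(1)}(y) = C_{1} + \tfrac{1}{2}\tan^{-1}\varphi(y)$ with $C_{1}\in\{0,\pi/2\}$, and similarly \eqref{thetaminus1} gives $\theta^{(-1)}(\tilde y) = C_{2} + \tfrac{1}{2}\tan^{-1}\tilde\varphi(\tilde y)$ with $C_{2}\in\{0,\pi/2\}$. Doubling and exploiting the $\pi$-periodicity of $\tan$ yields, uniformly across the subregions,
\[
\tan 2\theta^{(1)}(y) = \varphi(y), \qquad \tan 2\theta^{(-1)}(\tilde y) = \tilde\varphi(\tilde y),
\]
since the doubled additive constants $2C_{i}\in\{0,\pi\}$ are absorbed by the periodicity. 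The tangency condition $\theta^{(1)} \equiv \theta^{(-1)} \pmod \pi$ then at once implies $2\theta^{(1)} \equiv 2\theta^{(-1)} \pmod{2\pi}$, so $\tan 2\theta^{(1)} = \tan 2\theta^{(-1)}$, which is precisely $\varphi(y)=\tilde\varphi(\tilde y)$.

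The algebra itself is short; the main obstacle is the careful tracking of the piecewise jumps in $\theta^{(1)}$ and $\theta^{(-1)}$ across the branch boundaries $y=\delta^{\pm}$ and $\tilde y=\delta^{\ast}$, where the additive constant $C_{i}$ changes by $\pi/2$. The clean reason the argument succeeds is precisely that the doubling step converts these $\pi/2$-jumps into full multiples of $\pi$, making the scalar identity $\tan 2\theta = \varphi$ invariant under the branch transitions. I would also remark, before moving on, that the converse is \emph{not} valid in general: a solution of $\varphi(y)=\tilde\varphi(\tilde y)$ only encodes $\theta^{(1)}\equiv\theta^{(-1)}\pmod{\pi/2}$, so it may correspond either to a genuine tangency ($\bmod\,\pi$) or to an \emph{orthogonal} crossing of $\mathcal E^{(1)}$ and $\mathcal E^{(-1)}$ (off by $\pi/2$); sorting these out is exactly what the case-split in the definition of $\Gamma$ in Theorem \ref{th:tang} will accomplish in the next step of the paper.
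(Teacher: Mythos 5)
Your argument is correct, and it reaches the same scalar equation as the paper, but it is packaged differently. The paper's proof splits the strip $\{y\in[\delta^{-},1-\delta^{-}]\}$ into the two subregions $\mathcal R_{1}$ (where both $\theta^{(1)}$ and $\theta^{(-1)}$ carry the additive constant $\pi/2$) and $\mathcal R_{2}$ (where both constants are $0$), and on those matched regions reads off the two-way equivalence $\theta^{(1)}=\theta^{(-1)}\iff\varphi(y)=\tilde\varphi(\tilde y)$ directly from the definitions. You instead double the angle so that the constants $C_{i}\in\{0,\pi/2\}$ disappear by $\pi$-periodicity of $\tan$, recovering $\tan 2\theta^{(1)}=\varphi$ and $\tan 2\theta^{(-1)}=\tilde\varphi$ (which is in fact exactly how $\varphi$ and $\tilde\varphi$ arose in \eqref{e1forward} and its backward analogue), and you conclude only the forward implication: every tangency in the strip solves $\varphi=\tilde\varphi$. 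What your route buys is branch-uniformity: you cover even the ``mismatched'' subregions (e.g.\ $y\in[\delta^{-},\delta^{+}]$ but $\tilde y\in[\delta^{*},1-\delta^{*}]$) that the paper's proof passes over silently, so no tangency can be missed; and your explicit warning that a solution of $\varphi=\tilde\varphi$ may instead be an orthogonal crossing is a genuine point that the paper leaves implicit. What the paper's route buys is the exact characterization that the proof of Theorem \ref{th:tang} actually consumes: the branch restriction in \eqref{tangencyeq} (equivalently the definition of $\Gamma$) is precisely the restriction to $\mathcal R_{1}\cup\mathcal R_{2}$, and it is the agreement of the constants there that upgrades ``solution'' to ``tangency''. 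If you defer that step as you propose, you must still record somewhere that on the $\Gamma$-selected branches the two additive constants coincide, i.e.\ reproduce the paper's $\mathcal R_{1},\mathcal R_{2}$ observation; with that one sentence added your version is complete.
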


\begin{proof}
We divide the region \( \{(x,y): y\in
[\delta^{-}, 1-\delta^{-}]\} \) into two parts as follows: 
\begin{align*}
\mathcal R_{1} &= \{(x,y) : y\in [\delta^{-}, \delta^{+}] \cup 
[1-\delta^{+}, 1-\delta^{-}], \tilde y\in [0,
\delta^{*}]\cup [1-\delta^{*}, 1]\}, 
\\ 
\mathcal R_{2} &= \{(x,y): y\in (\delta^{+}, 1-\delta^{+}],    \tilde y \in 
[\delta^{*}, 1-\delta^{*}]\}
\end{align*}
It then just remains to notice that these the definitions of \(
\theta^{(1)} \) and \( \theta^{(-1)} \) give \(
\theta^{(1)}=\theta^{(-1)} \) if and only if \( \varphi(y) =
\tilde\varphi(\tilde y) \) precisely when \( (x,y) \in \mathcal
R_{1}\cup \mathcal R_{2}\).
\end{proof}

\begin{proof}[Proof of Theorem \ref{th:tang}]
From the previous discussion it follows that we just need to 
solve the equation
\begin{equation}\label{equalphi}
\varphi(y) = \tilde\varphi (\tilde y).
\end{equation} 
Figure  \ref{sidebyside} shows the graphs of 
these two functions side by side.
\begin{figure}[h]
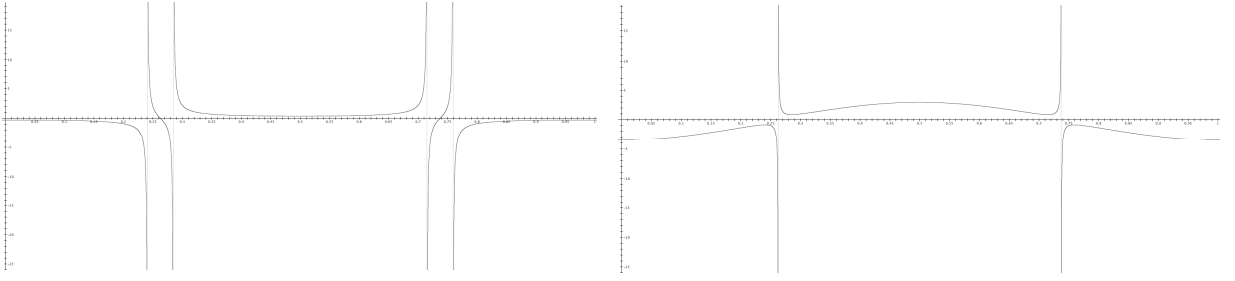

    \begin{center}
\includegraphics[width=8cm]{contrdirgraphf}
    \includegraphics[width=8cm]{contrdirgraphfinv}
    \caption{Graph of \( \varphi(y) \) and \( \tilde\varphi (\tilde y) \).}
    \label{sidebyside}
 \end{center}   
  \end{figure} 
  Notice that we need to consider the two regions \( \mathcal R_{1} \)
  and \( \mathcal R_{2} \) separately. These regions are easily
  defined in both graphs by the asymptotes which occur at \(
  \delta^{-}, \delta^{+}, 1=\delta^{+}, 1-\delta^{-} \) for \( \varphi \)
  and \( \delta^{*}, 1-\delta^{*} \) for \( \tilde\varphi \). 
  Equation \eqref{equalphi} can be written as
  \[
  y = \varphi^{-1}(\tilde\varphi (\tilde y)
  \]
  where the appropriate inverse branch is considered depending on the
  value of \( \tilde y \) so that we have 
  \begin{equation}\label{tangencyeq}
  y= \begin{cases}
  \varphi^{-1}(\tilde\varphi (\tilde y)) \cap 
  [\delta^{-}, \delta^{+}] \cup 
  [1-\delta^{+}, 1-\delta^{-}] & \text{ if } \tilde y\in [0,
\delta^{*}]\cup [1-\delta^{*}, 1]\\
\varphi^{-1}(\tilde\varphi (\tilde y)) \cap 
[\delta^{+}, 1-\delta^{+}] & \text{ if } 
\tilde y \in 
[\delta^{*}, 1-\delta^{*}].
\end{cases}
  \end{equation}
  Notice that in both cases we get exactly two solutions 
  corresponding to the two curves of tangencies.  
To calculate a more explicit expression for this we compute \( \varphi^{-1} \)
explicitly. Writing 
\[
\varphi(y)=-\frac{4\psi_{c} +2}{2\psi_{c}^2+2\psi_{c}-1} = z
 \]
 This gives the quadratic equation 
 \( 2z\psi_{c}^2 +2(z+2)\psi_{c}
    -z+2=0 \) in \( \psi_{c} \).
Solving for \( \psi_{c} \) this gives
 \[
 \psi_{c}=
 \frac{-2(z+2)\pm\sqrt{4(z+2)^2+
 8z(z-2)}}{4z}
 =
 \frac{-(z+2)\pm\sqrt{3z^2 + 4}}{2z}
 \]
From the
definition of \( \psi_{c}=\psi_{c}(y) = 2\pi k \cos (2\pi y) \) we
then get 
\begin{equation}\label{varphiinv}
y=\varphi^{-1}(z) 
=\frac{1}{2\pi}\cos^{-1}\frac{-(z+2)\pm\sqrt{3z^2+
 4}}{4\pi k z}
\end{equation}  

Notice that the expression under the square root is always positive.
The last thing to check is that the curves are contained in the two
thin strips as stated in the Theorem. To show this 
we  now define the values of 
\( \hat\delta_{T}^{-}, \delta_{T}^{-},    \delta_{T}^{+}, \hat \delta_{T}^{+}  \)
as 
\begin{align*}
    &\hat\delta_{T}^{-} :=
 \varphi^{-1}(\tilde\varphi (\delta^{-})) \cap [\delta^{-},
  \delta^{+}]  
\\ &
  \hat\delta_{T}^{+} := \varphi^{-1}(\tilde\varphi (\delta^{+})) \cap [\delta^{+},
  1/2] 
\\ & \delta_{T}^{-} := \varphi^{-1}(\tilde \varphi (0)) \cap 
  [\delta^{-},    \delta^{+}]  
  \\ & 
  \delta_{T}^{+}:= \varphi^{-1}(\tilde\varphi (1/2)) \cap 
   [\delta^{+}, 1/2]. 
\end{align*}
\begin{figure}[h]
    \begin{center}
  \includegraphics[width=5cm]{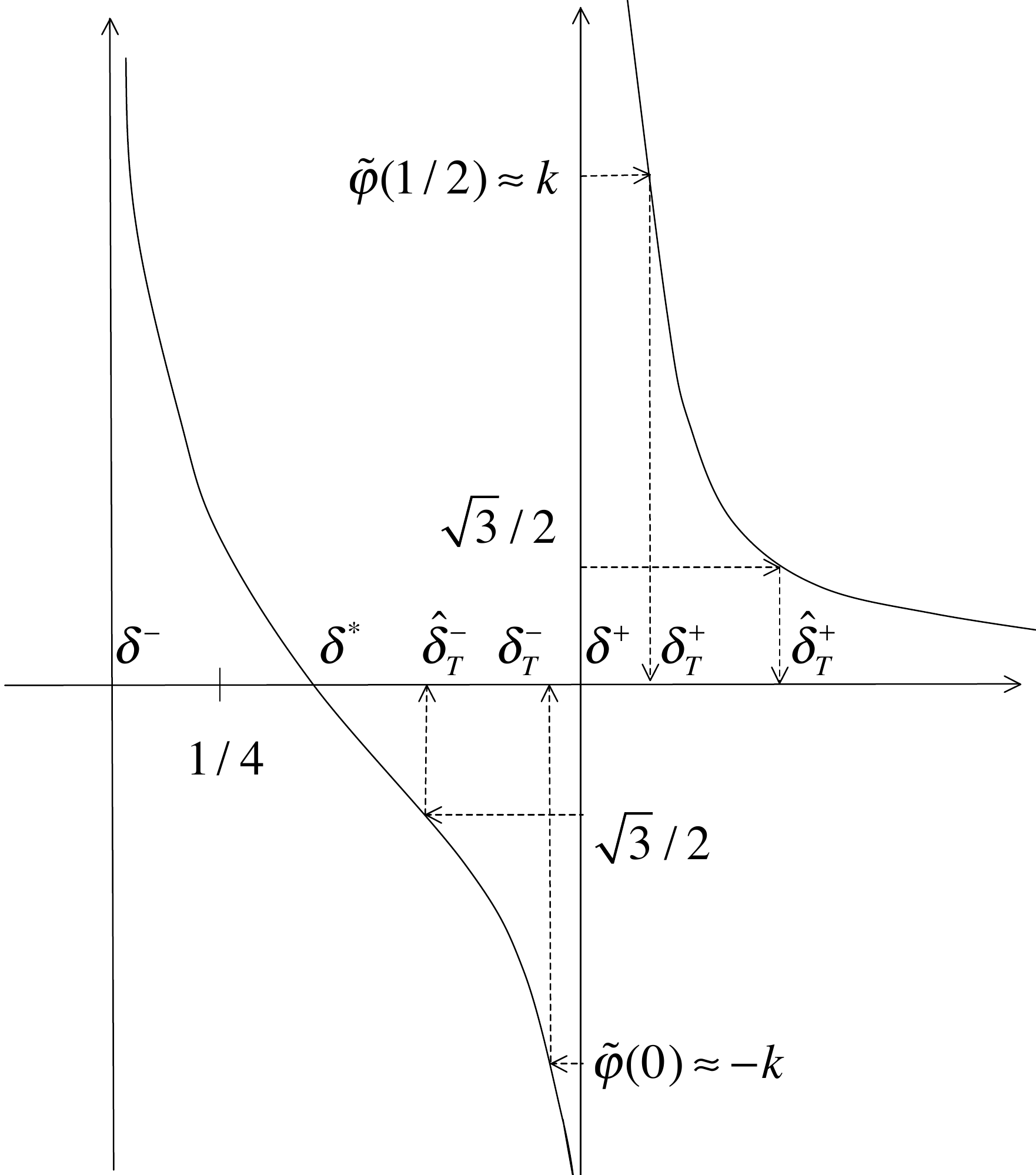}
\end{center}
\caption{}
  \label{varphigraph}
\end{figure}
It then follows, just by the geometry of the
  graphs of \( \varphi \) and \( \tilde\varphi \), see Figure
  \ref{varphigraph}, that 
\[
\delta^{-}< 1/4 < \delta^{*} < \hat\delta_{T}^{-} < \delta_{T}^{-} < 
    \delta^{+} < \delta_{T}^{+}< \hat \delta_{T}^{+} 
 \]
 and that all these
 constants tend to \( 1/4 \) (and thus to each other) as \( k\to \infty \)
 (although we shall not use this in any significant way, notice
 nonetheless that \( |\delta^{+}_{T}-\delta^{-}_{T}| \approx 1/k^{2} \)
 whereas \( |\hat \delta^{+}_{T}-\hat \delta^{-}_{T}| \approx 1/k  \)).
The statement then 
follows from an analysis of the graph in Figure \ref{sidebyside},
the formula \eqref{tangencyeq},
and the definitions made above. Indeed, notice that 
    as \( \tilde y \) ranges over the interval \( [0, \delta^{*}] \) 
    the lowest value attained by \(
    y=\varphi^{-1}(\tilde\varphi(\tilde y)) \) restricted to \(
    [\delta^{-}, \delta^{+}] \) is precisely \( \hat\delta^{-}_{T}\)
    which is attained at the point \( \tilde y = \delta^{-} \) where \( 
    \tilde\varphi \) has a local maximum. Similarly, as \( \tilde y \)
    ranges over the interval \( [\delta^{*}, 1/2] \), the highest
    value attained by 
    \(y=\varphi^{-1}(\tilde\varphi(\tilde y)) \) restricted to 
    \( [\delta^{+}, 1/2] \) is precisely \( \hat\delta^{+}_{T} \) 
    which is attained at the point \( \tilde y = \delta^{+} \) which
    is where \( \tilde\varphi \) has a local maximum. Exactly the same
    analysis works for the other regions. 
    This completes the proof of Theorem \ref{th:tang}.
\end{proof}

Before ending this section, we use the formula we have obtained to
compute a few explicit points on the curve of tangency to obtain
Figure \ref{tangencygraph}. 
From the definitions above we immediately have an explicitly defined
set of tangencies which includes 
the following points which we express in 
\( (\tilde y, y) \) coordinates: 

\begin{center}
\begin{tabular}{c | c ccccccc}
    \hline
    \(  \tilde y \) &  \( 0 \) & \( \delta^{-} \) & \( \delta^{*} \) & \(
     \delta^{+} \) & \( 1/2 \) & \( 1-\delta^{+} \) 
     & \( 1-\delta^{*} \) & \( 
     1-\delta^{-} \)  \\
    \( y \) & \( \delta^{-}_{T} \) & \( \hat\delta^{-}_{T} \) & \( \delta^{+}  \)
    & \(  \hat\delta^{+}_{T}  \) & \( \delta^{+}_{T}  \) & 
    \(  \hat\delta^{+}_{T}  \) &  \( \delta^{+} \) & 
    \(  \hat\delta^{-}_{T} \) \\
    \hline
 \end{tabular}
\end{center}
Notice that for
large \( k \) the argument of \( \cos^{-1} \) is close to \( 0 \).
Thus, using the fact that \( \cos'(0)=-1 \) we immediately have the
following limits as \( k\to \infty \): 
\[ \delta^{-}(k) \to \frac{1}{4}-\frac{\sqrt 3-1}{8\pi^{2}k}; 
\delta^{*}(k) \to \frac{1}{4}+\frac{1}{8\pi^{2}k};
\delta^{+}(k) \to \frac{1}{4}+\frac{1+\sqrt 3}{8\pi^{2}k};
\]
\[
\hat\delta^{-}_{T} \to \frac{1}{4}+\frac{1+\sqrt 3/3}{8\pi^{2}k};
\hat\delta^{+}_{T} \to \frac{1}{4}+\frac{1+3\sqrt 3}{8\pi^{2}k}.
\]
In particular this allows us to study the positions of various points.
We have 
\[ \delta^{+}-\delta^{*} \to \frac{\sqrt 3}{8\pi^{2}k};
\quad 
\hat\delta^{-}_{T}-\delta^{-} \to \frac{\frac{4}{3}\sqrt 3}{8\pi^{2}k} 
\quad
\hat\delta^{+}_{T}-\delta^{+} \to \frac{2\sqrt 3}{8\pi^{2}k}; 
\]
This implies the relative positions as illustrated in Figure
\ref{tangencygraph}.

\section{Uniform hyperbolicity}

In this section we prove Theorem \ref{th:hyp}. We divide the proof
into two parts, estimating the direction of \( \tilde v \) and its
magnitude respectively. 
First of all, however, we mention a couple of elementary consequences 
of our assumptions on the location of \( z \) and the direction of \( 
v \). 
From the definition of \( \psi_{c} \) we have 
\[ \psi_{c}(\delta^{(\pm m)}) = 2\pi k 
\cos \left(2\pi \left(\frac{1}{2\pi}\cos^{-1}\frac{\pm m}{\pi 
k}\right) \right)  = \pm 2m \]
and therefore, \( z\notin\Delta^{(m)} \) implies 
\[
|\psi_{c}|>2m.
\]
Also, from our assumptions \( \tan\theta\in (m^{-1}, m) \) for the
direction of \( v \) and \( m\geq 2 \) we  get 
\[
m^{-1}\sin\theta < \cos\theta < m \sin\theta
\quad\text{ and } \quad \sin\theta > m^{-1}/2.
\]

\subsection{Direction}

To estimate the direction of the image vector \( \tilde v = Df_{z}(v) 
= (\cos \tilde\theta, \sin\tilde\theta) \) we recall that 
the derivative of the standard map \( f=f_{k} \) at a point \( z=
(x,y) \) depends only on \( y \) and is 
\[ Df_y= \begin{pmatrix} 1 & 2\pi k \cos (2\pi y) \\ 1 & 1 + 2\pi k
\cos (2\pi y) \end{pmatrix} = 
\begin{pmatrix} 1 & \psi_{c}\\ 1 & 1 + \psi_{c} \end{pmatrix}
\]
Thus the image \( \tilde v = (\cos\tilde\theta, \sin\tilde\theta) \) 
of a generic vector \( v= ( \cos \theta, \sin \theta) \)
is 
\[ \begin{pmatrix} \cos \tilde \theta \\ \sin\tilde \theta\end{pmatrix} 
= Df_{y}\begin{pmatrix} \cos \theta \\ \sin\theta\end{pmatrix} 
=   \begin{pmatrix} 1 & \psi_{c}\\ 1 & 1 + \psi_{c} \end{pmatrix}
    \begin{pmatrix} \cos \theta \\ \sin\theta\end{pmatrix} 
=\begin{pmatrix} \cos\theta + \psi_{c} \sin\theta 
\\ \cos\theta + ( 1 + \psi_{c}) \sin\theta \end{pmatrix}
\]
Therefore 
\begin{equation}\label{thetatilde}
    \tilde\theta = \tan^{-1}\left( 
\frac{\cos\theta + ( 1 + \psi_{c}) \sin\theta }{\cos\theta + \psi_{c} \sin\theta}
\right)
= 
\tan^{-1}\left( 
1+ \frac{\sin\theta }{\cos\theta + \psi_{c} \sin\theta}
\right)
\end{equation}
Therefore it is sufficient to bound the absolute value of the 
fraction \(  {\sin\theta }/{(\cos\theta + \psi_{c} \sin\theta)}\) by \( 
m^{-1} \). 
From the condition \( \tan\theta \in(m^{-1}, m) \) we know
 that \( \sin\theta \) and \( \cos\theta \) have the same sign. 
 Suppose without loss of generality that \( \sin\theta > 0 \) and
 \( \cos\theta > 0 \).    We consider two different cases,
 corresponding to different regions of \( \mathbb
 T^{2}\setminus\Delta^{(m)} \). If \( \psi=\psi_{c}(y) > 0 \)  we have 
 \[ 
 \frac{|\sin\theta|}{|\cos\theta + \psi\sin\theta|} 
 =  \frac{\sin\theta}{\cos\theta + \psi\sin\theta} 
 \leq  \frac{\sin\theta}{\psi_{c}\sin\theta} = \frac{1}{\psi} 
 \leq \frac{1}{2m} <  m^{-1}.
 \]
 If \( \psi = \psi_{c}(y) < 0 \) we distinguish two further subcases. 
 If \( \cos\theta - |\psi|\sin\theta \geq   0 \) 
 then 
 \(
 \cos\theta -|\psi|\sin\theta >
 m^{-1}\sin\theta - |\psi|\sin\theta = (m^{-1}-|\psi|) \sin\theta 
 \)
 and
 therefore 
 \[ \frac{|\sin\theta|}{|\cos\theta + \psi\sin\theta|} =
 \frac{\sin\theta}{\cos\theta - |\psi|\sin\theta} 
 \leq \frac{\sin\theta}{(m^{-1}- |\psi|) \sin\theta} 
 = \frac{1}{m^{-1} - |\psi|}  < \frac{1}{m}.
 \]
 If \( \cos\theta - |\psi|\sin\theta \leq 0 \) then 
 \(
 \cos\theta - |\psi|\sin\theta \leq  m\sin\theta -
 |\psi|\sin\theta = (m-|\psi |) \sin\theta
 \)
 and so 
 \(
 |\cos\theta - |\psi\sin\theta| \geq  (m-|\psi|) \sin\theta
 \)
 and therefore 
 \[ \frac{|\sin\theta|}{|\cos\theta + \psi\sin\theta|} =
 \frac{\sin\theta}{|\cos\theta - |\psi|\sin\theta|} 
 \leq \frac{\sin\theta}{|m - |\psi|| \sin\theta} 
 = \frac{1}{|m- |\psi||} \leq \frac{1}{m}.
\]
 This completes the proof of the invariance of the conefield.

\begin{remark}
 Note that the formula \eqref{thetatilde} describes a \emph{general} relation
 between the angle \( \theta \) of a vector \( v  \), the point \( z
 \), and the angle \( \tilde\theta \) of the image vector \( \tilde v 
 = Df_{z}(v) \) (in particular it holds for \emph{every} \( \theta \) and
 \emph{every} \( z \)). We mention here two simply but possibly interesting 
 consequences of this
 formula even though they do not have a direct application for our
 immediate purposes.  First of all, 
 for any \( y \),  if \( \sin\theta = 0 \) (horizontal) then 
\( \tilde\theta = \tan^{-1}1 \) (positive diagonal). Thus the
\emph{horizontal vectors are always mapped to the positive diagonal.} 
Secondly,  notice that 
\( \psi_{c}= \psi_{c}(y)  = 2\pi k \cos (2\pi y)  \) is the only term in 
which the location of \( z=(x,y)  \)  comes into the equation. In
particular, for \( y= 1/4 \) and \( y=3/4 \) we have \( \psi_{c}=0 \) 
and this equation reduces to 
\[ \tilde\theta = 
\tan^{-1}\left( 
1+ \frac{\sin\theta }{\cos\theta}
\right) = \tan^{-1}(1+\tan \theta).
\]
Therefore, if \( \tan\theta = -1 \) (negative diagonal) we have \(
\tilde\theta = 0 \), (horizontal). Thus, 
\emph{when \( y= 1/4 \) and \( y=3/4 \) the negative diagonal is mapped to the
horizontal.} 
\end{remark}

 \subsection{Expansion}
 
We now want to estimate the size of the vector \( \tilde v = Df_{z}(v)
\).  By simply calculating the norm we get 
 \begin{align*}
 \|\tilde v\| & 
 = \sqrt{(\cos\theta+\psi\sin\theta)^{2}+ 
 (\cos\theta + (1+\psi) \sin\theta)^{2}}
\\ &=
\sqrt{2\cos^{2}\theta + 2(1+2\psi)\cos\theta\sin\theta + 
\psi^{2}(1+\psi)^{2} \sin^{2}\theta}
\\ & \geq 
\sqrt{4\psi \cos\theta\sin\theta + \psi^{2}(1+\psi)^{2} \sin^{2}\theta}
\\ & \geq \sqrt{4\psi m^{-1}\sin^{2}\theta+ 
\psi^{2}(1+\psi)^{2} \sin^{2}\theta } 
\\ & = \sin\theta \sqrt{4\psi m^{-1} + \psi^{2}(1+\psi)^{2}} 
\end{align*}
 Once again we distinguish two cases depending on the sign of \( \psi \).
 If \( \psi > 0 \) the estimate is straightforward and we get, using \( 
 \sin\theta > m^{-1}/2 \)  and \( \psi\geq 2m \)
 \[  \|\tilde v\|  \geq 
 \sin\theta \sqrt{\psi^{4}} 
 \geq 4m^{2}/2m = 2m > m.\]
 If \( \psi<0 \) we write 
  \[
  \psi^{2}(1+\psi)^{2}  + 4\psi m^{-1} = 
     \psi^{2}\left[(1+\psi)^{2}  + \frac{4\psi}{m \psi^{2}}  \right] 
 \geq \psi^{2}\left[(2m-1)^{2}  - \frac{2}{m^{2}}  \right] 
 \]
 Since \( m\geq 2 \) we have \( m^{2}\geq 4 \) and \( 2/m^{2}\leq 1/2 \)
 and therefore 
 \[
 (2m-1)^{2}-\frac{2}{m^{2}}\geq 4m^{2}-4m + 1 - \frac{2}{m^{2}} \geq  
 4m^{2}-4m = 4m^{2}(1-\frac{1}{m}) \geq 2m^{2}.
 \]
 This gives \(    \psi^{2}(1+\psi)^{2}  + 4\psi m^{-1} \geq
 2\psi^{2}m^{2} \geq 8m^{4} \) and therefore, substituting into the
 square root we get 
 \[ 
 \|\tilde v\| \geq \sin\theta \sqrt{4\psi m^{-1} + \psi^{2}(1+\psi)^{2}} 
 \geq \sin\theta \cdot \sqrt 8 m^{2} \geq \frac{\sqrt 8 m^{2}}{2m} >  m.
 \]
 This completes the proof of Theorem \ref{th:hyp}.

%

 \end{document}